\documentclass[11pt]{article}
\usepackage[margin=1in]{geometry}
\usepackage{amsmath,amssymb,amsthm,mathtools}
\usepackage{graphicx,booktabs}
\usepackage{algorithm,algpseudocode}
\usepackage[dvipsnames]{xcolor}
\usepackage[colorlinks=true,linkcolor=blue!60!black,citecolor=blue!60!black,urlcolor=blue!60!black]{hyperref}
\usepackage{microtype}
\graphicspath{{./images/}{./}}

\newtheorem{theorem}{Theorem}[section]
\newtheorem{lemma}[theorem]{Lemma}

\newtheorem{corollary}[theorem]{Corollary}
\theoremstyle{definition}

\newtheorem{assumption}[theorem]{Assumption}
\theoremstyle{remark}
\newtheorem{remark}[theorem]{Remark}

\newcommand{\R}{\mathbb{R}}
\newcommand{\Om}{\Omega}
\newcommand{\Dp}{D_p}
\newcommand{\Ds}{D_s}
\newcommand{\Gam}{\Gamma}
\newcommand{\eps}{\epsilon}
\newcommand{\ep}{\epsilon_p}
\newcommand{\es}{\epsilon_s}
\newcommand{\Phit}{\widehat{\Phi}}
\newcommand{\Phih}{\Phi_h}
\newcommand{\PhiH}{\Phi_H}
\newcommand{\Phitg}{\Phi_h^{\mathrm{tg}}}
\newcommand{\utg}{u_h^{\mathrm{tg}}}
\newcommand{\Etau}{E_\tau}
\newcommand{\rt}{\rho_\tau}
\newcommand{\Pt}{P_\tau}
\newcommand{\Jt}{J_\tau}
\newcommand{\norm}[1]{\left\|#1\right\|}
\newcommand{\enorm}[1]{|\!|\!|#1|\!|\!|}
\newcommand{\Hs}{H^{s}(\Dp\cup\Ds)}
\newcommand{\Hsig}{H^{\sigma}(\Dp\cup\Ds)}
\newcommand{\dd}{\,\mathrm{d}}
\newcommand{\Vh}{V_h}
\newcommand{\VH}{V_H}
\newcommand{\Vhz}{V_h^0}
\newcommand{\VHz}{V_H^0}
\newcommand{\Zmax}{Z_{\max}}
\newcommand{\MU}{M_U}

\DeclareMathOperator{\dist}{dist}

\begin{document}

\title{A Two-Grid Finite Element Method for the Nonlinear Poisson--Boltzmann Equation Based on a Solution Decomposition}
\author{Zhen Chao\thanks{Department of Mathematics, Western Washington University, Bellingham, WA 98225, USA. Corresponding author: \texttt{chaoz2@wwu.edu}.}
\and Jingchun Chen\thanks{College of Earth and Mineral Sciences, The Pennsylvania State University, University Park, PA 16802, USA.}
\and Cong He\thanks{Department of Mathematical Sciences, University of Wisconsin--Milwaukee, Milwaukee, WI 53211, USA.}}
\date{}
\begingroup\makeatletter\def\@fnsymbol#1{\@arabic{#1}}\makeatother
\maketitle
\endgroup

\begin{abstract}
The nonlinear Poisson--Boltzmann equation (PBE) is a widely used implicit-solvent model for the electrostatics of biomolecules in ionic solvent. Its numerical solution is complicated by the point charges, the dielectric jump across the molecular surface, and the exponential nonlinearity. We study a two-grid finite element method for the PBE built on the solution decomposition $u=G+\Psi+\Phit$ of Xie [J.~Comput.~Phys.~275 (2014) 294--309], in which $G$ carries the singularities analytically, $\Psi$ solves a linear interface problem, and $\Phit$ solves a semilinear interface problem for an arbitrary number of ionic species. The nonlinear problem for $\Phit$ is solved on a coarse mesh only, followed by a single Newton correction on the fine mesh, so that only symmetric positive definite linear systems are solved on the fine mesh. The analysis uses a $C^2$ truncation of the exponential that grows quadratically beyond a level $\tau$; for $\tau$ above an explicit bound the truncated problem has the same solution as the original one, and no discrete maximum principle or mesh-angle condition is needed. Under standard piecewise regularity assumptions we prove quasi-optimal energy and $L^2$ estimates and an $L^4$ estimate for the finite element approximation of $\Phit$, and for the two-grid solution we prove $\enorm{u-\utg}\le C\big(h^{s-1}+H^{2(s-1)+t/2}\big)$ in three dimensions, where $t=\min\{s,\sigma\}-1$. Whenever the dual problem is at least as regular as the primal one ($\sigma\ge s$), the coarse mesh can be chosen as $H\sim h^{2/5}$ without loss of accuracy. A manufactured-solution test confirms the predicted rates and the quadratic dependence of the two-grid correction on the coarse error.
\end{abstract}

\noindent\textbf{Keywords:} Poisson--Boltzmann equation; solution decomposition; semilinear interface problem; two-grid method; finite element method; a priori error estimates; biomolecular electrostatics.

\smallskip\noindent\textbf{MSC 2020:} 65N30, 65N15, 35J61, 92C40.

\section{Introduction}

The Poisson--Boltzmann equation (PBE) is a standard continuum model for the electrostatic potential of a biomolecule in an ionic solvent \cite{SharpHonig1990,Fogolari2002,Baker2005,LuZhouHolstMcCammon2008}. It is solved by finite difference and finite element packages such as APBS \cite{Baker2001}, DelPhi \cite{Rocchia2001} and MIBPB \cite{GengYuWei2007}, and by boundary integral methods \cite{LuChengHuangMcCammon2006,GengKrasny2013}. From the numerical point of view the PBE is an elliptic interface problem with three difficulties: the source is a sum of Dirac distributions at the atomic centers, so the potential is singular there; the dielectric coefficient jumps across the molecular surface; and the ionic charge density depends exponentially on the potential. We refer to the surveys \cite{LuZhouHolstMcCammon2008} and \cite{Xie2014} for the literature on each of these issues.

The point-charge singularities are usually removed by writing the potential as a singular part, known in closed form, plus one or two regular parts that solve interface problems with bounded data \cite{Zhou1996,ChernLiuWang2003,GengYuWei2007,ChenHolstXu2007,Holst2012}. In this paper we use the decomposition of Xie \cite{Xie2014},
\[
u = G + \Psi + \Phit,
\]
where $G$ is the Coulomb potential of the solute charges, $\Psi$ solves a linear interface problem that carries the boundary and interface conditions, and $\Phit$ solves a semilinear interface problem with homogeneous interface and boundary conditions. The nonlinearity in the problem for $\Phit$ comes from the ionic charges and acts only in the solvent region. Both $\Psi$ and $\Phit$ solve interface problems with bounded data, and the decomposition allows any number of ionic species. Xie combined it with a finite element method and a modified Newton minimization scheme in a FEniCS-based PBE solver \cite{Xie2014}, and the same splitting has been used in finite element solvers for the Poisson--Nernst--Planck equations \cite{XieChao2020,ChaoXie2021,YingFanLiLu2021}.

In such solvers most of the work goes into the nonlinear algebraic system for the regular part $\Phit$. It is usually solved by a damped Newton or Newton-type minimization method \cite{Holst2012,Xie2014}, and every Newton step requires a large linear solve on the fine mesh. The two-grid method of Xu \cite{Xu1994,Xu1996}, see also \cite{XuZhou2000,XuZhou2001,AxelssonLayton1996,MarionXu1995,DawsonWheelerWoodward1998,JinShuXu2006,BiGinting2011}, reduces this cost. The nonlinear problem is solved only on a coarse mesh with mesh size $H$, and the solution on the fine mesh with mesh size $h\ll H$ is obtained by one linearized (Newton) correction. The coarse mesh can be much coarser than the fine one, typically $H\sim h^{\gamma}$ with $\gamma<1$, without loss of asymptotic accuracy. Holst, Szypowski and Zhu \cite{HolstSzypowskiZhu2013} extended the analysis to semilinear interface problems with discontinuous coefficients, with the regularized PBE as their main example, and showed that $H\sim h^{2/5}$ suffices in three dimensions under full regularity. Two-grid methods have also been used to decouple and accelerate finite element solvers for the Poisson--Nernst--Planck equations \cite{ShenShuYangLu2020,YingFanLiLu2021}.

Here we apply the two-grid method to Xie's decomposition and give an a priori error analysis of the resulting scheme. Two points distinguish this setting from \cite{HolstSzypowskiZhu2013}. First, the nonlinear term in the problem for $\Phit$ is
\[
\rho(U+\Phit)\chi_{\Ds}, \qquad \rho(\xi) = -\beta\sum_{i=1}^{n} Z_i \bar c_i e^{-Z_i\xi},
\]
where $U=G+\Psi$ is the potential of the solute in pure water, $Z_i$ and $\bar c_i$ are the charge numbers and bulk concentrations of the ionic species, and $\chi_{\Ds}$ is the indicator function of the solvent region. This term depends on $x$ through $U$, vanishes in the solute region, and in the discrete problem $U$ is replaced by the finite element approximation $U_h=G+\Psi_h$, so the analysis has to account for this perturbation. Second, for nonlinearities of supercritical growth such as the exponential, the analysis in \cite{HolstSzypowskiZhu2013} relies on a discrete maximum principle, i.e., on a sign condition on the off-diagonal stiffness entries. For tetrahedral meshes generated from molecular surfaces this is a strong restriction. We do not need it. Instead we replace $e^{y}$ by a $C^2$ function $\Etau(y)$ that equals $e^{y}$ for $y\le\tau$ and is quadratic for $y>\tau$; this is a smooth version of the overflow safeguard (``blow-up'' truncation) used in \cite{Xie2014}. The truncated nonlinearity is monotone, grows quadratically and has a bounded second derivative. We prove the bound
\[
\norm{\Phit}_{L^\infty(\Om)}\le \norm{U}_{L^\infty(\Ds)}
\]
for the solution of the original problem and conclude that, for every $\tau$ above an explicit threshold, the truncated problem has the same unique solution. The error analysis can therefore be done for the truncated problem, where no pointwise bounds on discrete solutions are needed, and the results hold for the original problem. As a by-product we obtain a short proof of existence and uniqueness for $\Phit$ that works in $H^1_0(\Om)$ rather than on the set $\mathcal H$ used in \cite{Xie2014}.

The main results are as follows. Under piecewise $H^{s}$ regularity of $\Psi$ and $\Phit$ and $H^\sigma$ regularity of a linear dual problem, $1<s,\sigma\le 2$, we prove that the finite element approximation $\Phih$ of $\Phit$ computed with the perturbed datum $U_h$ satisfies energy-norm, $L^2$ and $L^4$ error estimates, in which the perturbation enters only through $\norm{\Psi-\Psi_h}_{L^2(\Ds)}$ (Theorems~\ref{thm:energy} and \ref{thm:L2}, Corollary~\ref{cor:L4}). For the two-grid solution $\utg = G+\Psi_h+\Phitg$ we prove
\[
\enorm{u-\utg}\le C\big(h^{s-1}+H^{2(s-1)+t/2}\big),\qquad t=\min\{s,\sigma\}-1,
\]
in three dimensions (Theorem~\ref{thm:twogrid}), so that the choice $H\sim h^{(s-1)/(2(s-1)+t/2)}$ keeps the accuracy of the nonlinear fine-mesh solution. If $\sigma\ge s$, in particular if $s=\sigma$, this is $H\sim h^{2/5}$. The estimate holds whether the coarse nonlinear problem uses the fine-mesh datum $U_h$ or the coarse-mesh datum $U_H=G+\Psi_H$; with the second choice the whole nonlinear computation stays on the coarse mesh. On the fine mesh the scheme needs only two linear solves with symmetric positive definite matrices, one for $\Psi_h$ and one for the Newton correction, and both can be done by conjugate gradients with multilevel preconditioners that are robust with respect to the dielectric jump \cite{XuZhu2008}. For a symmetric $1{:}1$ electrolyte the truncated nonlinearity is $\kappa^2\sinh_\tau$, a $C^2$ version of the modified hyperbolic sine of \cite{Xie2014}.

The remainder of the paper is organized as follows. Section~\ref{sec:model} recalls the dimensionless PBE model, the solution decomposition of \cite{Xie2014}, and the weak formulations. Section~\ref{sec:truncation} introduces the truncated nonlinearity and proves the a priori bounds and the equivalence of the truncated and the original problems. Section~\ref{sec:fem} contains the finite element discretization and the error estimates for $\Phih$, including the effect of the perturbed datum. Section~\ref{sec:twogrid} presents the two-grid algorithm, its error analysis and the choice of the coarse mesh size. Section~\ref{sec:numerics} reports a three-dimensional test with a manufactured solution, and Section~\ref{sec:conclusion} contains concluding remarks.

\section{The Poisson--Boltzmann model and its solution decomposition}
\label{sec:model}

\subsection{The dimensionless PBE model}

Let $\Om\subset\R^3$ be a bounded Lipschitz domain that is partitioned as $\overline{\Om} = \overline{\Dp}\cup\overline{\Ds}$, where $\Dp$ is the solute region hosting a biomolecule with $n_p$ atoms, $\Ds=\Om\setminus\overline{\Dp}$ is the solvent region, and $\Gam=\partial\Dp$ is the interface between them. We assume that $\overline{\Dp}\subset\Om$, so that $\Gam\cap\partial\Om=\emptyset$, and that both $\Dp$ and $\Ds$ are Lipschitz domains. The $j$th atom has position $\mathbf r_j\in\Dp$ and charge number $z_j$, and the solvent contains $n$ ionic species with charge numbers $Z_i\neq 0$ and bulk concentrations $\bar c_i\ge 0$ (in mol/L), $i=1,\dots,n$. The two regions are treated as homogeneous dielectric media with relative permittivities $\ep$ and $\es$; throughout we only use $0<\ep\le\es$, and we write $\eps$ for the piecewise constant function with $\eps|_{\Dp}=\ep$ and $\eps|_{\Ds}=\es$. Following \cite{Xie2014}, when the length is measured in \r{A}ngstr\"oms and the potential in units of $k_BT/e_c$, the PBE model takes the dimensionless form
\begin{equation}\label{eq:pbe}
\left\{
\begin{aligned}
-\ep\Delta u(\mathbf r) &= \alpha\sum_{j=1}^{n_p} z_j\delta_{\mathbf r_j}, && \mathbf r\in\Dp,\\
-\es\Delta u(\mathbf r) + \rho(u(\mathbf r)) &= 0, && \mathbf r\in\Ds,\\
u(\mathbf s^+) = u(\mathbf s^-),\quad \es\frac{\partial u(\mathbf s^+)}{\partial\mathbf n(\mathbf s)} &= \ep\frac{\partial u(\mathbf s^-)}{\partial\mathbf n(\mathbf s)}, && \mathbf s\in\Gam,\\
u(\mathbf s) &= \hat g(\mathbf s), && \mathbf s\in\partial\Om,
\end{aligned}
\right.
\end{equation}
where $\delta_{\mathbf r_j}$ is the Dirac distribution at $\mathbf r_j$, $\mathbf n$ is the unit outward normal of $\Dp$, $u(\mathbf s^\pm)=\lim_{t\to 0^+}u(\mathbf s\pm t\mathbf n(\mathbf s))$, $\hat g$ is a given boundary function (commonly $\hat g=0$), and the ionic charge density is described by
\begin{equation}\label{eq:rho}
\rho(\xi) := -\beta\sum_{i=1}^{n} Z_i\bar c_i e^{-Z_i\xi},\qquad \xi\in\R.
\end{equation}
The positive constants $\alpha$ and $\beta$ collect the physical constants; for $T=298.15$~K one has $\alpha\approx 7042.94$ and $\beta\approx 4.2413$ \cite{Xie2014}. We assume throughout that the bulk solution is electrically neutral,
\begin{equation}\label{eq:neutral}
\sum_{i=1}^{n} Z_i\bar c_i = 0,\qquad \bar c:=\sum_{i=1}^{n}\bar c_i>0,
\end{equation}
which is the standard physical assumption (the second condition merely excludes the trivial case of pure water) and implies $\rho(0)=0$. Since
\begin{equation}\label{eq:rhoprime}
\rho'(\xi) = \beta\sum_{i=1}^{n} Z_i^2\bar c_i e^{-Z_i\xi} > 0,
\end{equation}
the function $\rho$ is strictly increasing. For a symmetric $1{:}1$ electrolyte ($n=2$, $Z_1=-Z_2=1$, $\bar c_1=\bar c_2=I_s$) we have $\rho(\xi)=\kappa^2\sinh\xi$ with $\kappa^2=2\beta I_s$, and the equation in $\Ds$ becomes $-\es\Delta u+\kappa^2\sinh u=0$.

\subsection{Solution decomposition}

To remove the singularities caused by the Dirac distributions, Xie \cite{Xie2014} decomposes the solution of \eqref{eq:pbe} as
\begin{equation}\label{eq:decomp}
u = G + \Psi + \Phit \quad\text{in }\Om,
\end{equation}
where $G$ is the Coulomb potential of the solute charges,
\begin{equation}\label{eq:G}
G(\mathbf r) = \frac{\alpha}{4\pi\ep}\sum_{j=1}^{n_p}\frac{z_j}{|\mathbf r-\mathbf r_j|},
\end{equation}
$\Psi$ is the solution of the linear interface problem
\begin{equation}\label{eq:Psi}
\left\{
\begin{aligned}
\Delta\Psi &= 0 && \text{in }\Dp\cup\Ds,\\
\Psi(\mathbf s^+) = \Psi(\mathbf s^-),\quad \es\frac{\partial\Psi(\mathbf s^+)}{\partial\mathbf n} &= \ep\frac{\partial\Psi(\mathbf s^-)}{\partial\mathbf n} + (\ep-\es)\frac{\partial G(\mathbf s)}{\partial\mathbf n} && \text{on }\Gam,\\
\Psi &= \hat g - G && \text{on }\partial\Om,
\end{aligned}
\right.
\end{equation}
and $\Phit$ is the solution of the semilinear interface problem
\begin{equation}\label{eq:Phi}
\left\{
\begin{aligned}
\Delta\Phit &= 0 && \text{in }\Dp,\\
-\es\Delta\Phit + \rho(U+\Phit) &= 0 && \text{in }\Ds,\\
\Phit(\mathbf s^+) = \Phit(\mathbf s^-),\quad \es\frac{\partial\Phit(\mathbf s^+)}{\partial\mathbf n} &= \ep\frac{\partial\Phit(\mathbf s^-)}{\partial\mathbf n} && \text{on }\Gam,\\
\Phit &= 0 && \text{on }\partial\Om,
\end{aligned}
\right.
\end{equation}
with
\begin{equation}\label{eq:U}
U := G+\Psi.
\end{equation}
Since $-\Delta\big(\tfrac{1}{4\pi|\mathbf r-\mathbf r_j|}\big)=\delta_{\mathbf r_j}$, one checks directly that $u$ defined by \eqref{eq:decomp} satisfies \eqref{eq:pbe}; see \cite{Xie2014}. The functions $G$, $\Psi$ and $\Phit$ are the contributions of the solute charges, of the boundary and interface conditions, and of the ionic charges, respectively, and $U=G+\Psi$ is the potential of the solute in pure water. In \cite{Xie2014} the nonlinearity in \eqref{eq:Phi} is written as $-\beta\sum_i Z_i\bar c_i w_i(\mathbf r)e^{-Z_i\Phit}$ with $w_i=e^{-Z_iU}$; this is the same as $\rho(U+\Phit)$. Since all atomic centers $\mathbf r_j$ lie in $\Dp$ and $\dist(\mathbf r_j,\overline{\Ds})>0$, the function $G$ is real-analytic in a neighborhood of $\overline{\Ds}$ and, in particular, $G\in C^\infty(\overline{\Ds})$ and $\nabla G\in L^\infty(\Ds)$.

\subsection{Notation and weak formulations}

We use standard notation for Lebesgue and Sobolev spaces. For a measurable set $D\subseteq\Om$, $\norm{\cdot}_{0,p,D}$ denotes the norm of $L^p(D)$, $1\le p\le\infty$, and $(\cdot,\cdot)_D$ the $L^2(D)$ inner product; the subscript $D$ is omitted when $D=\Om$. For $s>0$ we write $\Hs$ for the space of functions $v$ with $v|_{\Dp}\in H^s(\Dp)$ and $v|_{\Ds}\in H^s(\Ds)$, normed by $\norm{v}^2_{\Hs}=\norm{v}^2_{H^s(\Dp)}+\norm{v}^2_{H^s(\Ds)}$. Let $H^1_0(\Om)=\{v\in H^1(\Om): v=0\text{ on }\partial\Om\}$. We shall use the Poincar\'e and Sobolev inequalities
\begin{equation}\label{eq:poincare}
\norm{v}_{0,2}\le C_P\norm{\nabla v}_{0,2},\qquad \norm{v}_{0,6}\le C\norm{\nabla v}_{0,2},\qquad \forall v\in H^1_0(\Om),
\end{equation}
and the Gagliardo--Nirenberg (Ladyzhenskaya) inequality in three dimensions \cite{Ladyzhenskaya1969}
\begin{equation}\label{eq:GN}
\norm{v}_{0,4}\le C_{L}\norm{v}_{0,2}^{1/4}\norm{\nabla v}_{0,2}^{3/4},\qquad \forall v\in H^1_0(\Om).
\end{equation}
By H\"older's inequality and \eqref{eq:poincare}, $\norm{v}_{0,p}\le C\norm{\nabla v}_{0,2}$ for every $p\in[1,6]$ and $v\in H^1_0(\Om)$.

The bilinear form associated with the dielectric operator is
\begin{equation}\label{eq:a}
a(v,w) := \ep\int_{\Dp}\nabla v\cdot\nabla w\dd\mathbf r + \es\int_{\Ds}\nabla v\cdot\nabla w\dd\mathbf r,\qquad v,w\in H^1(\Om),
\end{equation}
and we define the energy norm $\enorm{v}:=a(v,v)^{1/2}$ on $H^1_0(\Om)$. Clearly
\begin{equation}\label{eq:anorm}
\ep\norm{\nabla v}_{0,2}^2\le\enorm{v}^2\le\es\norm{\nabla v}_{0,2}^2,\qquad a(v,w)\le\enorm{v}\,\enorm{w}.
\end{equation}
Using \eqref{eq:poincare} we have in particular $\norm{v}_{0,p}\le C\enorm{v}$ for $p\in[1,6]$ and $v\in H^1_0(\Om)$.

The weak form of \eqref{eq:Psi} reads \cite{Xie2014}: find $\Psi\in H^1(\Om)$ with $\Psi=\hat g-G$ on $\partial\Om$ such that
\begin{equation}\label{eq:Psiweak}
a(\Psi,v) = (\ep-\es)\int_{\Ds}\nabla G\cdot\nabla v\dd\mathbf r\qquad\forall v\in H^1_0(\Om).
\end{equation}
Here the interface term $(\es-\ep)\int_\Gam \frac{\partial G}{\partial\mathbf n}v\dd s$ has been converted into a volume integral by Green's identity and the harmonicity of $G$ in $\Ds$, which is convenient for complex molecular surfaces. If $\hat g\in H^{1/2}(\partial\Om)$, problem \eqref{eq:Psiweak} has a unique solution by the Lax--Milgram lemma \cite[Theorem~4.1]{Xie2014}.

The weak form of \eqref{eq:Phi} reads: find $\Phit\in\mathcal H$ such that
\begin{equation}\label{eq:Phiweak}
a(\Phit,v) + \big(\rho(U+\Phit),v\big)_{\Ds} = 0\qquad\forall v\in H^1_0(\Om),
\end{equation}
where
\begin{equation}\label{eq:calH}
\mathcal H := \big\{v\in H^1_0(\Om): e^{-Z_iv}\in L^2(\Ds),\ i=1,\dots,n\big\}
\end{equation}
is a subset of $H^1_0(\Om)$ on which the nonlinear term is well defined whenever $U\in L^\infty(\Ds)$. It is shown in \cite[Theorem~4.2]{Xie2014} that \eqref{eq:Phiweak} has a unique solution, which is the unique minimizer of a strictly convex functional on $\mathcal H$. In Section~\ref{sec:truncation} we give a different proof that also produces the a priori bound needed for the error analysis.

Throughout the paper we make the following standing assumption on the data.

\begin{assumption}\label{ass:U}
The function $U=G+\Psi$ defined by \eqref{eq:U} satisfies $U\in L^\infty(\Ds)$. We write
\begin{equation}\label{eq:MU}
\MU := \norm{U}_{0,\infty,\Ds}.
\end{equation}
\end{assumption}

\begin{remark}
Since $G\in C^\infty(\overline{\Ds})$, Assumption~\ref{ass:U} only concerns $\Psi$. If $\Gam$ is of class $C^2$, $\Om$ is convex (for instance a box, the usual choice for the PBE) and $\hat g$ is smooth, then $\Psi\in H^2(\Dp)\cap H^2(\Ds)$ by \cite[Theorem~4.1]{Xie2014} and the regularity theory of elliptic interface problems \cite{NicaiseSandig1994,ChenZou1998}; because $\Ds$ is a Lipschitz domain in $\R^3$, the Sobolev embedding $H^2(\Ds)\hookrightarrow C(\overline{\Ds})$ then gives $U\in C(\overline{\Ds})$. More generally, the assumption holds whenever $\Psi\in H^{s}(\Ds)$ for some $s>3/2$; since the regularity Assumption~\ref{ass:reg} below also allows $1<s\le3/2$, we keep Assumption~\ref{ass:U} as a separate standing assumption. The constant $\MU$ depends only on the linear problem \eqref{eq:Psi}, i.e., on the potential of the solute in pure water, and can be estimated before the nonlinear problem is solved.
\end{remark}

\section{A \texorpdfstring{$C^2$}{C2} truncation of the nonlinearity and a priori bounds}
\label{sec:truncation}

The exponential growth of $\rho$ in \eqref{eq:rho} is the main difficulty in the finite element error analysis of \eqref{eq:Phiweak}: a priori, the Galerkin approximation of $\Phit$ is only bounded in $H^1$, and the nonlinearity is neither globally Lipschitz nor of subcritical growth. In \cite{HolstSzypowskiZhu2013} this is handled by discrete $L^\infty$ bounds, which require a mesh condition guaranteeing a discrete maximum principle. Here we proceed differently, following the idea of the ``blow-up'' test in \cite[Algorithm~2]{Xie2014}: the exponential is replaced by a $C^2$ function of quadratic growth, and we show that this does not change the solution.

\subsection{The truncated exponential}

For $\tau>0$ define
\begin{equation}\label{eq:Etau}
\Etau(y) :=
\begin{cases}
e^{y}, & y\le\tau,\\[2pt]
e^{\tau}\Big(1+(y-\tau)+\tfrac12(y-\tau)^2\Big), & y>\tau.
\end{cases}
\end{equation}
That is, $\Etau$ is the exponential up to the level $\tau$ and its second-order Taylor polynomial beyond it.

\begin{lemma}\label{lem:Etau}
For every $\tau>0$ the function $\Etau$ has the following properties.
\begin{enumerate}
\item[\rm (i)] $\Etau\in C^2(\R)$, $\Etau'(y)>0$ and $0<\Etau''(y)\le e^{\tau}$ for all $y\in\R$; in particular $\Etau$ is strictly increasing and strictly convex.
\item[\rm (ii)] $\Etau(y)=e^{y}$, $\Etau'(y)=e^y$ and $\Etau''(y)=e^y$ for all $y\le\tau$.
\item[\rm (iii)] For all $y\in\R$,
\begin{equation}\label{eq:Egrowth}
0<\Etau(y)\le e^{\tau}(1+|y|)^2,\qquad 0<\Etau'(y)\le e^{\tau}(1+|y|).
\end{equation}
\end{enumerate}
\end{lemma}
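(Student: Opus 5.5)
The plan is to compute the derivatives of $\Etau$ explicitly on each side of $\tau$ and then check the three items one at a time. For $y\le\tau$ we have $\Etau=\Etau'=\Etau''=e^y$, which already gives (ii). For $y>\tau$, writing $z=y-\tau>0$, direct differentiation gives
\[
\Etau(y)=e^\tau\big(1+z+\tfrac12 z^2\big),\qquad \Etau'(y)=e^\tau(1+z),\qquad \Etau''(y)=e^\tau .
\]
At $z=0$ these three expressions all equal $e^\tau$, which matches $e^y$ and its first two derivatives at $y=\tau$. So $\Etau$, $\Etau'$ and $\Etau''$ are continuous at $\tau$. Since each piece is smooth, this gives $\Etau\in C^2(\R)$.

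For the rest of (i), the sign and size of the derivatives follow from the formulas. On $y\le\tau$ we have $\Etau'=\Etau''=e^y>0$ and $e^y\le e^\tau$. On $y>\tau$ we have $\Etau'=e^\tau(1+z)>0$ and $\Etau''=e^\tau$. Hence $0<\Etau''\le e^\tau$ on all of $\R$. Strict monotonicity follows from $\Etau'>0$, and strict convexity from $\Etau''>0$.

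For (iii), positivity is again immediate: $e^y>0$, and $1+z+\tfrac12z^2>0$ for $z>0$. For the upper bounds, on $y\le\tau$ we use $e^y\le e^\tau\le e^\tau(1+|y|)^k$ with $k=1,2$. On $y>\tau>0$ we have $0<z<y=|y|$. This gives
\[
\Etau'(y)=e^\tau(1+z)\le e^\tau(1+|y|),\qquad \Etau(y)\le e^\tau(1+z)^2\le e^\tau(1+|y|)^2,
\]
where the middle step uses $1+z+\tfrac12 z^2\le(1+z)^2$.

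There is no real obstacle here. The only points needing care are matching the one-sided derivatives at $\tau$ and using $\tau>0$ to get $z\le|y|$ in (iii).
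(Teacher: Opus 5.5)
Your proposal is correct and follows the paper's proof essentially line for line. Both compute the derivatives on each side of $\tau$, match the one-sided values at $\tau$, and for $y>\tau$ use $0<y-\tau<y$ to obtain the growth bounds in (iii).
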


\begin{proof}
For $y>\tau$ we have
\[
\Etau'(y)=e^{\tau}\big(1+(y-\tau)\big),\qquad \Etau''(y)=e^{\tau},
\]
while for $y\le\tau$ all derivatives equal $e^{y}$. The one-sided limits of $\Etau$, $\Etau'$ and $\Etau''$ at $y=\tau$ coincide, which gives (i) and (ii). For (iii), if $y\le\tau$ then $\Etau(y)=e^y\le e^\tau$ and $\Etau'(y)\le e^\tau$; if $y>\tau$ then, since $0<y-\tau<y$,
\[
\Etau(y)\le e^\tau\big(1+(y-\tau)\big)^2\le e^\tau(1+y)^2,\qquad \Etau'(y)\le e^\tau(1+y).
\]
\end{proof}

The truncated ionic charge density and its primitive are defined by
\begin{equation}\label{eq:rhotau}
\rt(\xi) := -\beta\sum_{i=1}^{n} Z_i\bar c_i\,\Etau(-Z_i\xi),\qquad
\Pt(\xi) := \int_0^{\xi}\rt(\eta)\dd\eta,\qquad \xi\in\R.
\end{equation}
Let $\Zmax:=\max_i|Z_i|$.

\begin{lemma}\label{lem:rhotau}
Let \eqref{eq:neutral} hold and $\tau>0$. Then $\rt\in C^2(\R)$ and the following hold.
\begin{enumerate}
\item[\rm (i)] $\rt(0)=0$ and $\rt'(\xi)=\beta\sum_{i=1}^{n}Z_i^2\bar c_i\Etau'(-Z_i\xi)>0$ for all $\xi\in\R$; hence $\rt$ is strictly increasing, $\Pt$ is convex and $\Pt\ge 0$.
\item[\rm (ii)] There are constants $C_0,C_1,C_2>0$ depending only on $\tau,\beta,\Zmax$ and $\bar c$ such that for all $\xi\in\R$,
\begin{equation}\label{eq:rhogrowth}
\begin{aligned}
|\rt(\xi)|&\le C_0(1+|\xi|)^2,&\qquad 0<\rt'(\xi)&\le C_1(1+|\xi|),\\
|\rt''(\xi)|&\le C_2,&\qquad 0\le \Pt(\xi)&\le C_0(1+|\xi|)^3.
\end{aligned}
\end{equation}
\item[\rm (iii)] $\rt(\xi)=\rho(\xi)$, $\rt'(\xi)=\rho'(\xi)$ and $\rt''(\xi)=\rho''(\xi)$ whenever $\Zmax|\xi|\le\tau$.
\end{enumerate}
\end{lemma}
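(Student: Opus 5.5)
The plan is to derive everything from Lemma~\ref{lem:Etau} by the chain rule, applied termwise to the finite sum defining $\rt$. Since $\Etau\in C^2(\R)$ and $\xi\mapsto -Z_i\xi$ is linear, each summand $\Etau(-Z_i\xi)$ is $C^2$, so $\rt\in C^2(\R)$. Differentiating gives
\[
\rt'(\xi)=\beta\sum_{i=1}^n Z_i^2\bar c_i\,\Etau'(-Z_i\xi),\qquad
\rt''(\xi)=-\beta\sum_{i=1}^n Z_i^3\bar c_i\,\Etau''(-Z_i\xi).
\]

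For (i), $\rt(0)=-\beta\,\Etau(0)\sum_i Z_i\bar c_i$. Because $0<\tau$, Lemma~\ref{lem:Etau}(ii) gives $\Etau(0)=1$, and the neutrality condition \eqref{eq:neutral} then gives $\rt(0)=0$.

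Positivity of $\rt'$ needs $\Etau'>0$, which holds by Lemma~\ref{lem:Etau}(i), together with $Z_i\neq0$, $\bar c_i\ge0$ and $\bar c>0$. These ensure that at least one term of the sum is strictly positive. It follows that $\rt$ is strictly increasing.

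The primitive satisfies $\Pt'=\rt$, and $\rt$ is increasing, so $\Pt$ is convex. Also $\Pt(0)=0$ and $\Pt'(0)=0$, so $0$ is a minimizer of the convex function $\Pt$, which gives $\Pt\ge0$. Alternatively, $\rt$ has the sign of $\eta$, so the integrand in $\int_0^\xi\rt$ makes the integral nonnegative.

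For (ii), apply the growth bounds \eqref{eq:Egrowth} with $y=-Z_i\xi$, using $1+|Z_i\xi|\le \max(1,\Zmax)(1+|\xi|)$. This yields
\[
|\rt(\xi)|\le \beta\Zmax\bar c\,e^\tau\max(1,\Zmax)^2(1+|\xi|)^2,
\]
and similarly $\rt'(\xi)\le\beta\Zmax^2\bar c\,e^\tau\max(1,\Zmax)(1+|\xi|)$.

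For the second derivative, the bound $0<\Etau''\le e^\tau$ gives $|\rt''|\le\beta\Zmax^3\bar c\,e^\tau=:C_2$.

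For $\Pt$, integrate the bound on $|\rt|$ along the segment from $0$ to $\xi$:
\[
\Pt(\xi)\le\int_0^{|\xi|}C_0(1+\eta)^2\,d\eta\le \tfrac{C_0}{3}(1+|\xi|)^3.
\]
After enlarging $C_0$ if necessary, one constant serves both the bound on $|\rt|$ and the bound on $\Pt$. All constants depend only on $\tau$, $\beta$, $\Zmax$ and $\bar c$.

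For (iii), if $\Zmax|\xi|\le\tau$ then $-Z_i\xi\le|Z_i||\xi|\le\tau$ for every $i$. Lemma~\ref{lem:Etau}(ii) then gives $\Etau=\Etau'=\Etau''=\exp$ at each point $-Z_i\xi$. Comparing termwise with \eqref{eq:rho} and \eqref{eq:rhoprime}, and with the analogous formula for $\rho''$, gives the three identities.

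No step is a real obstacle. The only points needing care are using $\bar c>0$ to obtain strict positivity of $\rt'$, and keeping the constants independent of $\xi$.
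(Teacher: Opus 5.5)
Your proposal is correct and follows the paper's proof essentially step by step: the termwise chain rule, neutrality combined with $\Etau(0)=1$, the growth bounds of Lemma~\ref{lem:Etau}(iii) with $1+|Z_i\xi|$ bounded by a multiple of $1+|\xi|$, integration of the $|\rt|$ bound to get the $\Pt$ bound, and Lemma~\ref{lem:Etau}(ii) for part (iii). The differences are cosmetic: you use the factor $\max(1,\Zmax)$ where the paper uses $1+\Zmax$, and your integral bound already gives $\Pt(\xi)\le\frac{C_0}{3}(1+|\xi|)^3$, so enlarging $C_0$ is not actually needed.
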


\begin{proof}
(i) By \eqref{eq:neutral} and $\Etau(0)=1$, $\rt(0)=-\beta\sum_iZ_i\bar c_i=0$. The formula for $\rt'$ follows from the chain rule and $\rt'>0$ from Lemma~\ref{lem:Etau}(i), since $\bar c_i\ge0$ and $\bar c>0$. Convexity of $\Pt$ follows from $\Pt''=\rt'>0$, and $\Pt\ge0$ from $\Pt(0)=0$, $\Pt'(0)=\rt(0)=0$ and convexity.
(ii) The bounds on $\rt$ and $\rt'$ follow from \eqref{eq:Egrowth} and $(1+|Z_i\xi|)\le(1+\Zmax)(1+|\xi|)$; for instance
\[
C_0=\beta e^{\tau}\Zmax\bar c(1+\Zmax)^2,\qquad C_1=\beta e^\tau\Zmax^2\bar c(1+\Zmax)
\]
will do. Since
\[
\rt''(\xi)=-\beta\sum_{i=1}^{n}Z_i^3\bar c_i\,\Etau''(-Z_i\xi),
\]
Lemma~\ref{lem:Etau}(i) gives $|\rt''(\xi)|\le C_2:=\beta e^\tau\Zmax^3\bar c$. The bound on $\Pt$ follows by integrating the bound on $\rt$.
(iii) If $\Zmax|\xi|\le\tau$ then $-Z_i\xi\le\tau$ for all $i$, and Lemma~\ref{lem:Etau}(ii) applies.
\end{proof}

\begin{remark}
In the symmetric $1{:}1$ case, $\rt(\xi)=\kappa^2\sinh_\tau(\xi)$ and $\rt'(\xi)=\kappa^2\cosh_\tau(\xi)$ with
\begin{equation}\label{eq:sinhtau}
\sinh_\tau(\xi):=\tfrac12\big(\Etau(\xi)-\Etau(-\xi)\big),\qquad
\cosh_\tau(\xi):=\tfrac12\big(\Etau'(\xi)+\Etau'(-\xi)\big).
\end{equation}
They are $C^2$ counterparts of the modified hyperbolic functions $\widetilde{\sinh}$ and $\widetilde{\cosh}$ of \cite[Eq.~(49)]{Xie2014}, which are obtained by freezing $\sinh$ and $\cosh$ at the values $\pm\sinh\tau$ and $\cosh\tau$ for $|\xi|\ge\tau$; those functions are continuous but not differentiable at $\pm\tau$, and $\widetilde{\cosh}$ is not the derivative of $\widetilde{\sinh}$. The functions in \eqref{eq:sinhtau} coincide with $\sinh$ and $\cosh$ for $|\xi|\le\tau$, $\cosh_\tau=\sinh_\tau'$, and $\sinh_\tau$ is odd, increasing and of quadratic growth.
\end{remark}

\subsection{The truncated problem and its well-posedness}

For $\tau>0$ consider the truncated version of \eqref{eq:Phiweak}: find $\Phi_\tau\in H^1_0(\Om)$ such that
\begin{equation}\label{eq:Phitau}
a(\Phi_\tau,v)+\big(\rt(U+\Phi_\tau),v\big)_{\Ds}=0\qquad\forall v\in H^1_0(\Om).
\end{equation}
By Assumption~\ref{ass:U} and \eqref{eq:rhogrowth}, for $\Phi\in H^1_0(\Om)$ we have $|\rt(U+\Phi)|\le C_0(1+\MU+|\Phi|)^2\in L^3(\Ds)$ because $\Phi\in L^6(\Om)$, so that the nonlinear term in \eqref{eq:Phitau} is well defined for every $v\in H^1_0(\Om)\subset L^{3/2}(\Om)$. Let
\begin{equation}\label{eq:Jtau}
\Jt(v):=\tfrac12 a(v,v)+\int_{\Ds}\Pt(U+v)\dd\mathbf r,\qquad v\in H^1_0(\Om),
\end{equation}
which is finite by \eqref{eq:rhogrowth} since $H^1_0(\Om)\subset L^3(\Om)$.

\begin{theorem}\label{thm:wellposed}
Let Assumption~\ref{ass:U} and \eqref{eq:neutral} hold and let $\tau>0$. Then problem \eqref{eq:Phitau} has a unique solution $\Phi_\tau\in H^1_0(\Om)$, which is the unique minimizer of $\Jt$ over $H^1_0(\Om)$, and
\begin{equation}\label{eq:stab}
\enorm{\Phi_\tau}\le \ep^{-1/2}C_P\,\norm{\rt(U)}_{0,2,\Ds}\le \ep^{-1/2}C_PC_0|\Ds|^{1/2}(1+\MU)^2.
\end{equation}
\end{theorem}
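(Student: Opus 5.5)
The plan is to use either monotone operator theory or the direct method for $\Jt$. I take the variational route, since it gives the minimizer characterization directly.

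\emph{Properties of $\Jt$.} Since $\Pt$ is convex and $C^1$ with $\Pt'=\rt$, the functional $\Jt$ is strictly convex on $H^1_0(\Om)$, because $\tfrac12 a(v,v)$ is strictly convex. It is coercive, because $\Pt\ge0$ gives $\Jt(v)\ge\tfrac12\enorm{v}^2$. It is continuous, because $v\mapsto\int_{\Ds}\Pt(U+v)$ is continuous on $L^3(\Ds)$ by the cubic growth bound \eqref{eq:rhogrowth}, Assumption~\ref{ass:U}, and the Nemytskii continuity theorem. A continuous convex functional is weakly lower semicontinuous. Since $H^1_0(\Om)$ is reflexive, a minimizer exists, and strict convexity makes it unique.

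\emph{Euler--Lagrange equation.} I would show that $\Jt$ is G\^ateaux differentiable with
\[
\Jt'(w)v=a(w,v)+(\rt(U+w),v)_{\Ds}.
\]
For the nonlinear term, the mean value theorem gives the difference quotient $\rt(U+w+\theta t v)\,v$, which is dominated by $C_0(1+\MU+|w|+|v|)^2|v|\in L^1$ because $w,v\in L^6\subset L^3$. Dominated convergence then applies. Hence the minimizer solves \eqref{eq:Phitau}. Conversely, any solution of \eqref{eq:Phitau} is a critical point of a convex differentiable functional, so it is a minimizer and coincides with the unique one. Alternatively, uniqueness follows directly: subtract the equations for two solutions and test with their difference. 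Monotonicity of $\rt$ makes the nonlinear term nonnegative, which leaves $\enorm{\Phi_1-\Phi_2}^2\le0$.

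\emph{Stability bound.} Test \eqref{eq:Phitau} with $v=\Phi_\tau$ and write $\rt(U+\Phi_\tau)=[\rt(U+\Phi_\tau)-\rt(U)]+\rt(U)$. The bracketed term times $\Phi_\tau$ is pointwise nonnegative because $\rt$ is increasing (Lemma~\ref{lem:rhotau}(i)). Therefore
\[
\enorm{\Phi_\tau}^2\le -(\rt(U),\Phi_\tau)_{\Ds}\le \norm{\rt(U)}_{0,2,\Ds}\norm{\Phi_\tau}_{0,2}.
\]
Next apply Poincar\'e and \eqref{eq:anorm}: $\norm{\Phi_\tau}_{0,2}\le C_P\norm{\nabla\Phi_\tau}_{0,2}\le C_P\ep^{-1/2}\enorm{\Phi_\tau}$. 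Dividing by $\enorm{\Phi_\tau}$ gives the first inequality in \eqref{eq:stab}. The second follows from $|\rt(U)|\le C_0(1+\MU)^2$ pointwise on $\Ds$.

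I expect the only real obstacle to be the rigorous justification of differentiability and continuity of the nonlinear integral, namely the growth and integrability bookkeeping via $H^1_0\subset L^6$. Everything else is standard.
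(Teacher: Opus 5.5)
Your proposal is correct and follows the paper's proof essentially step for step. Both arguments obtain the minimizer from strict convexity, coercivity via $\Pt\ge0$, and Nemytskii continuity on $L^3$. Both derive the Euler--Lagrange equation by dominated convergence, get uniqueness through the critical-point/minimizer equivalence, and prove the stability bound by testing with $\Phi_\tau$ after subtracting $(\rt(U),\Phi_\tau)_{\Ds}$. Your direct monotonicity argument for uniqueness is a valid, slightly more elementary alternative to that equivalence.
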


\begin{proof}
The functional $\Jt$ is strictly convex on $H^1_0(\Om)$, since $v\mapsto a(v,v)$ is strictly convex and $\Pt$ is convex. It is coercive, since $\Pt\ge0$ gives $\Jt(v)\ge\frac12\enorm{v}^2$. It is continuous: if $v_k\to v$ in $H^1_0(\Om)$ then $v_k\to v$ in $L^3(\Om)$, and since
\[
0\le\Pt(U+v)\le C_0\big(1+\MU+|v|\big)^3\qquad\text{a.e.\ in }\Ds,
\]
the Nemytskii operator $v\mapsto\Pt(U+v)$ is continuous from $L^3(\Ds)$ to $L^1(\Ds)$ \cite{AmbrosettiProdi}. A continuous convex functional is weakly lower semicontinuous, so $\Jt$ has a unique minimizer $\Phi_\tau\in H^1_0(\Om)$ \cite[Proposition~II.1.2]{EkelandTemam}. Moreover, $\Jt$ is G\^ateaux differentiable with
\[
\Jt'(v)w = a(v,w)+\big(\rt(U+v),w\big)_{\Ds},\qquad v,w\in H^1_0(\Om),
\]
by dominated convergence, using \eqref{eq:rhogrowth} and $v,w\in L^6(\Om)$. Hence the minimizer satisfies \eqref{eq:Phitau}. Conversely, every solution of \eqref{eq:Phitau} is a critical point of the strictly convex functional $\Jt$ and therefore its unique minimizer; this proves uniqueness.

For \eqref{eq:stab}, take $v=\Phi_\tau$ in \eqref{eq:Phitau} and subtract $(\rt(U),\Phi_\tau)_{\Ds}$ from both sides:
\[
\enorm{\Phi_\tau}^2+\big(\rt(U+\Phi_\tau)-\rt(U),\Phi_\tau\big)_{\Ds} = -\big(\rt(U),\Phi_\tau\big)_{\Ds}.
\]
The second term on the left is nonnegative because $\rt$ is increasing, so by the Cauchy--Schwarz inequality, \eqref{eq:poincare} and \eqref{eq:anorm},
\[
\enorm{\Phi_\tau}^2\le\norm{\rt(U)}_{0,2,\Ds}\norm{\Phi_\tau}_{0,2}\le\norm{\rt(U)}_{0,2,\Ds}\,C_P\,\ep^{-1/2}\enorm{\Phi_\tau},
\]
which gives the first inequality in \eqref{eq:stab}; the second follows from \eqref{eq:rhogrowth}, since $|\rt(U)|\le C_0(1+\MU)^2$ a.e.\ in $\Ds$.
\end{proof}

\subsection{A priori \texorpdfstring{$L^\infty$}{L-infinity} bounds and equivalence with the original problem}

The following bound is the analogue of \cite[Theorem~2.3]{HolstSzypowskiZhu2013} and \cite[Lemma~3.1]{ChenHolstXu2007} for the problem \eqref{eq:Phitau}; its proof uses only the monotonicity of $\rt$ and the fact that $\rt(0)=0$. No smoothness of $\Gam$ beyond the Lipschitz property is required.

\begin{theorem}\label{thm:Linfty}
Let Assumption~\ref{ass:U} and \eqref{eq:neutral} hold, let $\tau>0$ and let $\Phi_\tau$ be the solution of \eqref{eq:Phitau}. Then
\begin{equation}\label{eq:Linfty}
-\MU\le\Phi_\tau\le\MU\qquad\text{a.e.\ in }\Om.
\end{equation}
The same bound holds for every solution $\Phit\in\mathcal H$ of the original problem \eqref{eq:Phiweak}.
\end{theorem}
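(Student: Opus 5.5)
We use a Stampacchia truncation argument, testing \eqref{eq:Phitau} with the positive part of $\Phi_\tau-\MU$ and the negative part of $\Phi_\tau+\MU$. For the upper bound we set $v:=(\Phi_\tau-\MU)^+$. Since $\MU\ge 0$ is a constant and $\Phi_\tau=0$ on $\partial\Om$, we have $v\in H^1_0(\Om)$ with $\nabla v=\nabla\Phi_\tau\,\chi_{\{\Phi_\tau>\MU\}}$. Hence $a(\Phi_\tau,v)=a(v,v)=\enorm{v}^2$. Inserting $v$ into \eqref{eq:Phitau} gives
\[
\enorm{v}^2 + \big(\rt(U+\Phi_\tau),v\big)_{\Ds}=0 .
\]
On the set $\{v>0\}\cap\Ds$ we have $\Phi_\tau>\MU\ge -U$ a.e., because $|U|\le\MU$ a.e.\ in $\Ds$ by Assumption~\ref{ass:U}. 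So $U+\Phi_\tau>0$ there, and Lemma~\ref{lem:rhotau}(i) (namely $\rt(0)=0$ and $\rt$ increasing) gives $\rt(U+\Phi_\tau)\ge 0$. The integrand $\rt(U+\Phi_\tau)v$ is therefore nonnegative a.e.\ in $\Ds$, and it is integrable by the growth bound \eqref{eq:rhogrowth} together with $\Phi_\tau,v\in L^6$. Thus $\enorm{v}^2\le 0$, so $v=0$ by \eqref{eq:anorm} and Poincar\'e, i.e.\ $\Phi_\tau\le\MU$ a.e. The lower bound is symmetric: we test with $w:=(\Phi_\tau+\MU)^-=\min(\Phi_\tau+\MU,0)\in H^1_0(\Om)$, use $a(\Phi_\tau,w)=\enorm{w}^2$, and observe that on $\{w<0\}\cap\Ds$ we have $U+\Phi_\tau<0$. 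Then $\rt(U+\Phi_\tau)\le 0$, so $\rt(U+\Phi_\tau)w\ge0$, and again $w=0$.

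For a solution $\Phit\in\mathcal H$ of \eqref{eq:Phiweak} we run the same argument with $\rho$ in place of $\rt$, using $\rho(0)=0$ and \eqref{eq:rhoprime}. This step needs care, and it is the main obstacle. Problem \eqref{eq:Phiweak} is required for all $v\in H^1_0(\Om)$, but $\rho(U+\Phit)$ is a priori only known to lie in $L^2(\Ds)$, since $e^{-Z_i\Phit}\in L^2(\Ds)$ and $U\in L^\infty$. The pairing with $v\in H^1_0\subset L^2$ is therefore well defined, and the truncations $(\Phit-\MU)^+$ and $(\Phit+\MU)^-$ belong to $H^1_0(\Om)$. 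So the test functions are admissible, and the sign argument goes through verbatim. If one prefers not to rely on the interpretation of the pairing, we first test with the bounded truncations $\min\{(\Phit-\MU)^+,k\}$ and let $k\to\infty$ by monotone convergence, which works since the integrand has a fixed sign.

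The only delicate points are the sign of the nonlinear term on the relevant sets and the admissibility of the test functions. The chain rule for the truncation in $H^1$ is standard. No regularity of $\Gam$ enters, since everything is a variational identity.
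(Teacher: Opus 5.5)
Your proposal is correct and is essentially the paper's proof. It uses the same Stampacchia test functions $(\Phi-\MU)^+$ and $(\Phi+\MU)^-$, the same sign argument for the nonlinear term on their supports (using only monotonicity and vanishing at $0$), and the same observation that for $\Phit\in\mathcal H$ one has $\rho(U+\Phit)\in L^2(\Ds)$, so the truncations are admissible test functions. Your bounded-truncation fallback is unnecessary, and it would not even need $k\to\infty$, since $\min\{(\Phit-\MU)^+,k\}=0$ for one fixed $k>0$ already gives $\Phit\le\MU$ a.e.
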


\begin{proof}
Let $\Phi$ denote either $\Phi_\tau$ or a solution $\Phit\in\mathcal H$ of \eqref{eq:Phiweak}, and let $r$ denote $\rt$ or $\rho$ accordingly; in both cases $r$ is increasing with $r(0)=0$, and $r(U+\Phi)\in L^2(\Ds)$ (for $\rt$ this was shown above; for $\rho$ it follows from $U\in L^\infty(\Ds)$ and the definition of $\mathcal H$). Consider
\[
\phi:=(\Phi-\MU)^+=\max\{\Phi-\MU,0\}.
\]
Since $\Phi\in H^1_0(\Om)$ and $\MU\ge0$, we have $\phi\in H^1_0(\Om)$, $\phi\ge0$, and, by \cite[Theorem~A.1]{KinderlehrerStampacchia} and \cite[Lemma~7.6]{GilbargTrudinger},
\[
\nabla\phi=\nabla\Phi\ \text{ a.e.\ on }\{\Phi>\MU\},\qquad \nabla\phi=0\ \text{ a.e.\ on }\{\Phi\le\MU\}
\]
Hence $a(\Phi,\phi)=a(\phi,\phi)$. On the set $\{\phi>0\}\cap\Ds$ we have $U+\Phi\ge -\MU+\MU=0$, so $r(U+\Phi)\ge r(0)=0$ there, and therefore $(r(U+\Phi),\phi)_{\Ds}\ge0$. Taking $v=\phi$ in the weak formulation gives
\[
a(\phi,\phi) = a(\Phi,\phi) = -\big(r(U+\Phi),\phi\big)_{\Ds}\le0,
\]
so that $\enorm{\phi}=0$ and $\phi=0$, i.e., $\Phi\le\MU$ a.e. For the lower bound, let
\[
\psi:=(\Phi+\MU)^-=\min\{\Phi+\MU,0\}\in H^1_0(\Om),\qquad \psi\le0.
\]
As before $a(\Phi,\psi)=a(\psi,\psi)$, and on $\{\psi<0\}\cap\Ds$ we have $U+\Phi\le\MU-\MU=0$, so $r(U+\Phi)\le0$ and $(r(U+\Phi),\psi)_{\Ds}\ge0$. Taking $v=\psi$ gives $a(\psi,\psi)\le0$, hence $\psi=0$ and $\Phi\ge-\MU$ a.e.
\end{proof}

We can now state the main result of this section: for a truncation level above an explicit threshold, the truncated problem and the original problem have the same solution.

\begin{theorem}\label{thm:equiv}
Let Assumption~\ref{ass:U} and \eqref{eq:neutral} hold, and let
\begin{equation}\label{eq:tau0}
\tau\ge\tau_0:=2\Zmax\MU.
\end{equation}
Then the solution $\Phi_\tau$ of the truncated problem \eqref{eq:Phitau} belongs to $\mathcal H$ and is the unique solution of the original problem \eqref{eq:Phiweak}. In particular, the solution $\Phit$ of \eqref{eq:Phiweak} does not depend on $\tau\ge\tau_0$, satisfies $\norm{\Phit}_{0,\infty}\le\MU$, and
\begin{equation}\label{eq:UPhi}
\norm{U+\Phit}_{0,\infty,\Ds}\le 2\MU.
\end{equation}
\end{theorem}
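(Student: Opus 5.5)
The plan is to use the $L^\infty$ bound of Theorem~\ref{thm:Linfty} for the truncated solution to show that the truncation is never active along $U+\Phi_\tau$. Then $\Phi_\tau$ solves the original problem, and uniqueness in $\mathcal H$ identifies it with $\Phit$.

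\textbf{Step 1 (truncation inactive).} By Theorem~\ref{thm:Linfty}, $|\Phi_\tau|\le\MU$ a.e.\ in $\Om$. Together with Assumption~\ref{ass:U} this gives $|U+\Phi_\tau|\le 2\MU$ a.e.\ in $\Ds$. Hence $\Zmax|U+\Phi_\tau|\le 2\Zmax\MU=\tau_0\le\tau$ a.e.\ in $\Ds$. Lemma~\ref{lem:rhotau}(iii), applied pointwise, then yields $\rt(U+\Phi_\tau)=\rho(U+\Phi_\tau)$ a.e.\ in $\Ds$.

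\textbf{Step 2 ($\Phi_\tau$ solves the original problem).} For membership in $\mathcal H$, observe that $|Z_i\Phi_\tau|\le\Zmax\MU$ a.e. Therefore $e^{-Z_i\Phi_\tau}\le e^{\Zmax\MU}$ is bounded, and since $\Ds$ is bounded it lies in $L^2(\Ds)$. Thus $\Phi_\tau\in\mathcal H$. Substituting the identity from Step 1 into \eqref{eq:Phitau} shows that $\Phi_\tau$ satisfies \eqref{eq:Phiweak} for every $v\in H^1_0(\Om)$.

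\textbf{Step 3 (uniqueness, the only delicate point).} The paper has promised a proof that does not rely on \cite{Xie2014}, so I will give a direct monotonicity argument rather than simply cite \cite[Theorem~4.2]{Xie2014}. Let $\Phit\in\mathcal H$ be any solution of \eqref{eq:Phiweak}. By the second part of Theorem~\ref{thm:Linfty}, $\norm{\Phit}_{0,\infty}\le\MU$. Step 1 then applies verbatim to $\Phit$ and gives $\rho(U+\Phit)=\rt(U+\Phit)$ a.e.\ in $\Ds$. Consequently $\Phit$ solves the truncated problem \eqref{eq:Phitau}, and Theorem~\ref{thm:wellposed} forces $\Phit=\Phi_\tau$. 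If one prefers to avoid Theorem~\ref{thm:wellposed}, subtract the two weak forms and test with $\Phit-\Phi_\tau\in H^1_0(\Om)$. This test function is admissible, and all products are integrable because both $\rho$-terms are bounded. Strict monotonicity of $\rho$ then gives $\enorm{\Phit-\Phi_\tau}^2\le0$.

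\textbf{Step 4 (remaining claims).} Independence of $\tau\ge\tau_0$ follows because every such $\Phi_\tau$ equals the unique $\Phit$. The bound $\norm{\Phit}_{0,\infty}\le\MU$ and \eqref{eq:UPhi} were already obtained in Step 1.

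The main obstacle is organizing the uniqueness step. It must not be circular, and it must apply to an arbitrary $\mathcal H$-solution. This is handled because Theorem~\ref{thm:Linfty} already covers solutions in $\mathcal H$. The only point needing care is that $\rho(U+\Phit)\in L^2(\Ds)$, so that the test function is admissible, and that property was verified in that proof.
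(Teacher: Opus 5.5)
Your proof is correct and follows the paper's argument: Theorem~\ref{thm:Linfty} gives $|U+\Phi_\tau|\le 2\MU$, so Lemma~\ref{lem:rhotau}(iii) makes the truncation inactive. Uniqueness then follows, as in the paper, by applying the same $L^\infty$ bound to an arbitrary solution in $\mathcal H$ and invoking Theorem~\ref{thm:wellposed}; your optional direct monotonicity argument in Step~3 is also valid, and plain monotonicity of $\rho$ suffices there.
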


\begin{proof}
By Theorem~\ref{thm:Linfty}, $|\Phi_\tau|\le\MU$ a.e.\ in $\Om$, so that
\[
|U+\Phi_\tau|\le2\MU\quad\text{and}\quad \Zmax|U+\Phi_\tau|\le\tau_0\le\tau\qquad\text{a.e.\ in }\Ds .
\]
By Lemma~\ref{lem:rhotau}(iii), $\rt(U+\Phi_\tau)=\rho(U+\Phi_\tau)$ a.e.\ in $\Ds$, so $\Phi_\tau$ satisfies \eqref{eq:Phiweak}; and $\Phi_\tau\in\mathcal H$ because it is bounded. Conversely, let $\Phit\in\mathcal H$ be any solution of \eqref{eq:Phiweak}. By the second part of Theorem~\ref{thm:Linfty}, $|\Phit|\le\MU$, hence $\rho(U+\Phit)=\rt(U+\Phit)$ and $\Phit$ solves \eqref{eq:Phitau}. By the uniqueness in Theorem~\ref{thm:wellposed}, $\Phit=\Phi_\tau$.
\end{proof}

\begin{remark}\label{rem:tau}
Theorem~\ref{thm:equiv} gives a second proof of the existence and uniqueness result \cite[Theorem~4.2]{Xie2014} for $\Phit$. It also explains why the truncation used in practice is harmless. In \cite{Xie2014} the exponentials are truncated at $\tau=85$ to avoid overflow in the first Newton iterations, with the argument that the truncation is inactive near the minimizer; by Theorem~\ref{thm:equiv}, for every $\tau\ge2\Zmax\norm{U}_{0,\infty,\Ds}$ the truncated problem coincides with the original one. In the discrete problems below we always use the truncated nonlinearity. A code that uses $\rt$ as its overflow safeguard computes exactly this discrete solution, and with the truncation the constants in the error estimates depend on $\tau$ (through $e^{\tau}$) but not on pointwise bounds of discrete solutions. The smallest constants are obtained for $\tau$ close to $\tau_0$, which only requires a bound on the potential of the solute in water in the solvent region; this bound is available once $\Psi$ has been computed. Since $|Z_i(U+\Phit)|\le\tau_0$ by \eqref{eq:UPhi}, the factor $e^{\tau_0}$ reflects the actual size of the ionic densities $e^{-Z_i u}$ and is not caused by the truncation.
\end{remark}

Henceforth we fix $\tau\ge\tau_0$ and denote by $\Phit=\Phi_\tau$ the unique solution of \eqref{eq:Phiweak} and \eqref{eq:Phitau}. For later use we record the consequences of Theorem~\ref{thm:equiv} and Lemma~\ref{lem:rhotau} that
\begin{equation}\label{eq:cbound}
0<c(\mathbf r):=\rt'\big(U(\mathbf r)+\Phit(\mathbf r)\big)\le C_1(1+2\MU)=:c_{\max}\qquad\text{for a.e.\ }\mathbf r\in\Ds .
\end{equation}

\section{Finite element discretization and error estimates}
\label{sec:fem}

\subsection{Meshes, spaces and assumptions}

From now on, we assume  that $\Om$ and $\Dp$ are polyhedral, so that the interface $\Gam$ is a closed polyhedral surface, and we let $\{\mathcal T_h\}_{h>0}$ be a family of conforming, shape-regular and quasi-uniform tetrahedral meshes of $\Om$ that are fitted to the interface, i.e., every $T\in\mathcal T_h$ satisfies $T\subset\overline{\Dp}$ or $T\subset\overline{\Ds}$. Quasi-uniformity is used only in Section~\ref{sec:twogrid}, to relate mesh sizes to numbers of unknowns. Here $h=\max_{T\in\mathcal T_h}\operatorname{diam}(T)$, and without loss of generality $h\le1$. All constants $C$ below are independent of the mesh, unless stated otherwise. (If $\Gam$ is a curved surface, the results below remain valid for meshes that resolve $\Gam$ up to $O(h^2)$, cf.\ \cite{ChenZou1998,LiMelenkWohlmuthZou2010,HolstSzypowskiZhu2013}; we do not pursue this generalization.) Let
\[
\Vh:=\{v\in C(\overline\Om): v|_T\in\mathbb P_1(T)\ \forall T\in\mathcal T_h\}\subset H^1(\Om),\qquad \Vhz:=\Vh\cap H^1_0(\Om),
\]
be the spaces of continuous piecewise linear functions. The restriction to piecewise linear elements is for ease of exposition; see Remark~\ref{rem:higherorder}.

\begin{assumption}[regularity]\label{ass:reg}
There exist $s,\sigma\in(1,2]$ such that:
\begin{enumerate}
\item[\rm (i)] the solutions $\Psi$ of \eqref{eq:Psiweak} and $\Phit$ of \eqref{eq:Phiweak} satisfy $\Psi,\Phit\in\Hs$;
\item[\rm (ii)] for every measurable $c$ with $0\le c\le c_{\max}$ a.e.\ in $\Ds$, where $c_{\max}$ is the constant in \eqref{eq:cbound}, and every $f\in L^2(\Om)$, the solution $w\in H^1_0(\Om)$ of the linear dual problem
\begin{equation}\label{eq:dual}
a(v,w)+(c\,v,w)_{\Ds}=(f,v)\qquad\forall v\in H^1_0(\Om),
\end{equation}
satisfies $w\in\Hsig$ and
\begin{equation}\label{eq:dualreg}
\norm{w}_{\Hsig}\le C_R\norm{f}_{0,2},
\end{equation}
with $C_R$ independent of $c$ and $f$ (it may depend on $c_{\max}$). (The two cases used below are $c=0$, for the linear component $\Psi$, and $c=\rt'(U+\Phit)$, for the nonlinear component $\Phit$.)
\end{enumerate}
We set
\begin{equation}\label{eq:t}
t:=\min\{s,\sigma\}-1\in(0,1].
\end{equation}
\end{assumption}

Both parts of Assumption~\ref{ass:reg} are regularity statements for linear elliptic interface problems with piecewise constant diffusion coefficient: $\Phit$ solves \eqref{eq:Phi} with the bounded right-hand side $-\rho(U+\Phit)\chi_{\Ds}\in L^\infty(\Om)$ (see \eqref{eq:UPhi}), $\Psi$ solves \eqref{eq:Psi} with data determined by $G$ and $\hat g$, and \eqref{eq:dual} is an interface problem with a bounded nonnegative reaction coefficient, which can be moved to the right-hand side since $\norm{cw}_{0,2,\Ds}\le c_{\max}C_P\ep^{-1/2}\enorm{w}\le C\norm{f}_{0,2}$ (test \eqref{eq:dual} with $v=w$ and use $c\ge0$). Assumptions of this type are common in the analysis of interface problems, see \cite[Assumption~4.3]{LiMelenkWohlmuthZou2010} and \cite[Eq.~(3.11)]{HolstSzypowskiZhu2013}.

\begin{remark}\label{rem:reg}
The exponents $s$ and $\sigma$ are determined by the regularity of the linear interface problems \eqref{eq:Psi} and \eqref{eq:dual}, hence by the geometry of $\Gam$ and $\partial\Om$ and by the ratio $\es/\ep$. For the polyhedral interfaces of this section one can in general only expect $s,\sigma\in(1,2)$, with values depending on the edge and vertex angles of $\Gam$ and on $\es/\ep$ \cite{Kellogg1975,NicaiseSandig1994}. If $\Gam$ is of class $C^2$, $\Om$ is convex and $\hat g$ is smooth, then $s=\sigma=2$ \cite{NicaiseSandig1994,ChenZou1998,Xie2014}, because $G\in C^\infty(\overline{\Ds})$ and the right-hand side of \eqref{eq:Phi} is bounded. This case lies outside the polyhedral setting assumed here, but it is the relevant one when $\Gam$ is smooth and the mesh approximates it with sufficient accuracy \cite{LiMelenkWohlmuthZou2010}. Since $\Psi$ and $\Phit$ solve interface problems of the same type with smooth or bounded data, their regularity is limited by the same edge and vertex singularities, and one can usually take $s=\sigma$. The case $\sigma<s$ occurs when the solution is smoother than in the worst case, for example for the manufactured solution of Section~\ref{sec:numerics}.
\end{remark}

\begin{lemma}\label{ass:approx}
	For every $v\in H^1_0(\Om)\cap H^{r}(\Dp\cup\Ds)$ with $1<r\le2$ there exists $v_h\in\Vhz$ such that
	\begin{equation}\label{eq:approx}
		\norm{v-v_h}_{0,2}+h\,\enorm{v-v_h}\le C_Ah^{r}\norm{v}_{H^r(\Dp\cup\Ds)}.
	\end{equation}
\end{lemma}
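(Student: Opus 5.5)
We obtain $v_h$ by gluing subdomain-wise Scott--Zhang quasi-interpolants, so that the interface-fitted mesh gives local approximation on each side while conformity holds across $\Gam$. Since $r\le 2$ may be $\le 3/2$, nodal interpolation is not well defined in three dimensions ($H^r\not\hookrightarrow C$ for $r\le 3/2$). We therefore use an averaging operator.

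\textbf{Construction.} Let $\mathcal T_h^p$ and $\mathcal T_h^s$ be the elements in $\overline{\Dp}$ and $\overline{\Ds}$. For each vertex $z$ of $\mathcal T_h$ choose a face $F_z$ in the Scott--Zhang way, subject to three constraints:
\begin{itemize}
\item if $z\in\partial\Om$, take $F_z\subset\partial\Om$;
\item if $z\in\Gam$, take $F_z\subset\Gam$;
\item otherwise take $F_z$ inside the subdomain containing $z$.
\end{itemize}
Because $v\in H^1_0(\Om)$, its traces on $\Gam$ from both sides coincide. Hence the value $\int_{F_z}\psi_z v$ with the dual basis function $\psi_z$ is unambiguous, and $v_h$ is a single continuous function. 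It lies in $\Vhz$ because the boundary vertices reproduce the zero trace.

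\textbf{Local estimates.} Restricted to $\Dp$, $v_h$ is exactly the Scott--Zhang interpolant of $v|_{\Dp}\in H^r(\Dp)$ built on the mesh $\mathcal T_h^p$ of the Lipschitz polyhedron $\Dp$. For this interpolant, choosing a face on the boundary of the subdomain is allowed. The same holds for $\Ds$. Scott--Zhang operators preserve $\mathbb P_1$ locally and are $H^1$-stable on element patches $\omega_T$ taken within the subdomain. Combining this with Bramble--Hilbert on $\omega_T$ (interpolating between $m=1$ and $m=2$ for fractional $r$) gives
\[
\norm{v-v_h}_{0,2,T}+h_T|v-v_h|_{1,T}\le Ch_T^{r}|v|_{H^r(\omega_T)}.
\]
Summing over $T$ in each subdomain uses the finite overlap of the patches, which follows from shape regularity. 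Finally, \eqref{eq:anorm} gives $\enorm{v-v_h}\le\es^{1/2}\norm{\nabla(v-v_h)}_{0,2}$, which yields \eqref{eq:approx}.

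\textbf{Main obstacle.} The main difficulty is the fractional-order estimate for $1<r<2$ on patches with a uniform constant. The first approach is to write the Scott--Zhang error bound for integer orders 1 and 2, namely $\|v-I_hv\|_{H^m(\Dp)}\le Ch^{k-m}\|v\|_{H^k(\Dp)}$ with $m\in\{0,1\}\le k\in\{1,2\}$. One then interpolates between $H^1(\Dp)$ and $H^2(\Dp)$ via the K-method; on Lipschitz domains these interpolation spaces coincide with $H^r$ by extension operators. This global interpolation avoids patchwise fractional Bramble--Hilbert constants entirely.

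A second point to check is well-definedness of the face averages. The traces of $H^1$ functions on faces are in $L^2$, so the averages exist for all $r>1$, and the quasi-uniformity assumption is not needed.
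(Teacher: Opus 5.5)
Your proposal is correct and follows essentially the paper's argument. The paper uses a Scott--Zhang quasi-interpolant whose averaging never crosses $\Gam$ and which preserves the homogeneous boundary values, and your choice $F_z\subset\Gam$ at interface vertices produces exactly that: $v_h$ restricts to the subdomain Scott--Zhang interpolants on $\overline{\Dp}$ and $\overline{\Ds}$, and the estimates then follow subdomain by subdomain. The paper only sketches this, citing Chen--Zou and Li--Melenk--Wohlmuth--Zou for details and noting the nodal interpolant as an option when $r>3/2$. You make the fractional case explicit by interpolating the integer-order bounds between $H^1$ and $H^2$ on the Lipschitz subdomains, which is a sound way to fill in that step.
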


\begin{proof}
	The estimate \eqref{eq:approx} is a standard consequence of the elementwise
	interpolation estimates on fitted meshes, i.e., meshes whose elements satisfy
	$T\subset\overline{\Dp}$ or $T\subset\overline{\Ds}$.
	
	For $r>3/2$, the nodal interpolant $I_hv$ is well defined, since $H^r(\Dp)$ and $H^r(\Ds)$ embed into the continuous functions and $v$ is continuous across $\Gam$, and the usual
	elementwise interpolation estimates give \eqref{eq:approx} because every element lies in $\overline{\Dp}$ or in $\overline{\Ds}$. For general
	$r\in(1,2]$, the nodal interpolant need not be defined, and one instead uses a
	Scott--Zhang type quasi-interpolant \cite{ScottZhang1990} whose averaging
	patches do not cross $\Gam$ and which preserves homogeneous boundary values; the local estimates then
	yield \eqref{eq:approx} for all $r\in(1,2]$. We refer to
	\cite{ChenZou1998} and \cite[Theorem~3.5]{LiMelenkWohlmuthZou2010} for the
	details.
\end{proof}

\subsection{The linear component}

The finite element approximation of $\Psi$ is defined as in \cite[Algorithm~1]{Xie2014}: find $\Psi_h\in\Vh$ such that $\Psi_h=I_h^{\partial}(\hat g-G)$ on $\partial\Om$ and
\begin{equation}\label{eq:Psih}
a(\Psi_h,v)=(\ep-\es)\int_{\Ds}\nabla G\cdot\nabla v\dd\mathbf r\qquad\forall v\in\Vhz,
\end{equation}
where $I_h^\partial$ denotes nodal interpolation on the boundary mesh. Since $G$ is smooth near $\partial\Om$, the boundary datum $\hat g-G$ is as smooth as $\hat g$. From now on we assume that $\hat g$ is smooth, as is the case in practice ($\hat g=0$ or a smooth far-field approximation), so that nodal interpolation on $\partial\Om$ is defined. (If only $\hat g\in H^{1/2}(\partial\Om)$, $I_h^\partial$ can be replaced by the trace of a Scott--Zhang quasi-interpolant \cite{ScottZhang1990}.) Problem \eqref{eq:Psih} is a standard discretization of a linear interface problem with piecewise constant coefficients \cite{Babuska1970,ChenZou1998,PlumWieners2003,LiMelenkWohlmuthZou2010}. We include a short proof of the error estimate because of the nonhomogeneous Dirichlet datum.

\begin{lemma}\label{lem:Psih}
Let Assumption~\ref{ass:reg} hold and let $\hat g$ be smooth. Then for every mesh $\mathcal{T}_h$ of the family, the solutions of \eqref{eq:Psiweak} and \eqref{eq:Psih} satisfy
\begin{equation}\label{eq:Psierr}
\enorm{\Psi-\Psi_h}\le C_\Psi h^{s-1},\qquad
\norm{\Psi-\Psi_h}_{0,2}\le C_\Psi h^{s+\sigma-2}\le C_\Psi h^{s-1+t},
\end{equation}
with $s,\sigma$ and $t$ as in Assumption~\ref{ass:reg} and $C_\Psi$ independent of $h$.
\end{lemma}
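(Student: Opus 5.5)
The proof reduces the nonhomogeneous Dirichlet problem to a homogeneous one by subtracting a discrete lifting, then applies Céa's lemma and an Aubin--Nitsche duality argument.

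\textbf{Energy estimate.} Let $v_h^\ast\in\Vh$ be a quasi-interpolant of $\Psi$. On the boundary take the nodal values $I_h^\partial(\hat g-G)$, and in the interior use the Scott--Zhang-type construction of Lemma~\ref{ass:approx} with patches not crossing $\Gam$. Because $\Psi\in\Hs$ and $\hat g-G$ is smooth near $\partial\Om$, this gives
\[
\norm{\Psi-v_h^\ast}_{0,2}+h\,\enorm{\Psi-v_h^\ast}\le Ch^{s}.
\]
Here $\enorm{\cdot}$ is understood as $a(\cdot,\cdot)^{1/2}$ on $H^1(\Om)$.

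Since $\Psi_h-v_h^\ast\in\Vhz$, subtracting \eqref{eq:Psih} from \eqref{eq:Psiweak} gives Galerkin orthogonality,
\[
a(\Psi-\Psi_h,w_h)=0\quad\forall w_h\in\Vhz .
\]
Choosing $w_h=\Psi_h-v_h^\ast$ yields
\[
\enorm{\Psi-\Psi_h}^2=a(\Psi-\Psi_h,\Psi-v_h^\ast).
\]
Hence $\enorm{\Psi-\Psi_h}\le\enorm{\Psi-v_h^\ast}\le C h^{s-1}$.

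\textbf{$L^2$ estimate.} Let $e=\Psi-\Psi_h$. Then $e\notin H^1_0(\Om)$ in general, since on $\partial\Om$ it equals $(\hat g-G)-I_h^\partial(\hat g-G)$, the boundary interpolation error. Let $w\in H^1_0(\Om)$ solve the dual problem \eqref{eq:dual} with $c=0$ and $f=e$. By Assumption~\ref{ass:reg}(ii), $\norm{w}_{\Hsig}\le C_R\norm{e}_{0,2}$.

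The plan is to split $e=e_0+e_b$. Here $e_b\in\Vh$ is the discrete harmonic-type lifting of the boundary error, or more simply the finite element function equal to $(\hat g-G)-I_h^\partial(\hat g-G)$ at boundary nodes... Since that quantity vanishes at the boundary nodes, it is cleaner to instead write
\[
e=(\Psi-\tilde\Psi)+(\tilde\Psi-\Psi_h).
\]
In this splitting:
\begin{itemize}
\item $\tilde\Psi$ solves \eqref{eq:Psiweak} with boundary datum $I_h^\partial(\hat g-G)$, so $\tilde\Psi-\Psi_h$ satisfies a homogeneous problem with Galerkin orthogonality.
\item $\Psi-\tilde\Psi$ is an $a$-harmonic function whose boundary data are $O(h^2)$ in $H^{1/2}(\partial\Om)$ and $L^2(\partial\Om)$, by smoothness of $\hat g-G$. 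Its $L^2$ norm is therefore $O(h^{2})\le O(h^{s+\sigma-2})$ by a standard very-weak/transposition bound, or simply by $H^1$ stability and the trace interpolation estimate, which gives $O(h^{3/2})$. If even that is insufficient, one exploits smoothness near $\partial\Om$, where there is no interface, since $\Gam\cap\partial\Om=\emptyset$.
\end{itemize}
For the homogeneous part $\tilde e=\tilde\Psi-\Psi_h\in H^1_0$, the Aubin--Nitsche argument applies:
\[
\norm{\tilde e}_{0,2}^2=a(\tilde e,w-w_h)\le\enorm{\tilde e}\,Ch^{\sigma-1}\norm{w}_{\Hsig},
\]
using Lemma~\ref{ass:approx} with $r=\sigma$. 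Combined with $\enorm{\tilde e}\le Ch^{s-1}$, this gives $h^{s+\sigma-2}$.

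Finally, $h^{s+\sigma-2}\le h^{s-1+t}$ because $\sigma-1\ge t$ and $h\le1$.

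\textbf{Main obstacle.} The main obstacle is the boundary-datum perturbation in the duality step: the consistency term $\norm{\Psi-\tilde\Psi}_{0,2}$ must be shown to be $O(h^{s+\sigma-2})$. I would handle it by noting that near $\partial\Om$ the problem is the Laplacian with smooth data, away from $\Gam$. This allows a very weak formulation,
\[
\norm{\Psi-\tilde\Psi}_{0,2}\le C\norm{(\hat g-G)-I_h^\partial(\hat g-G)}_{0,2,\partial\Om}\le Ch^2,
\]
with the duality function's normal derivative bounded on $\partial\Om$ by interior/boundary regularity of $w$ in a collar of $\partial\Om$.
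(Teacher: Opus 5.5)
Your proof is correct, and its $L^2$ part takes a genuinely different route from the paper.

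The paper fixes a smooth lifting $\bar\Psi$ of $\hat g-G$ that vanishes near $\Gam$, and splits $e=(\bar\Psi-I_h\bar\Psi)+e^0$ with $e^0\in H^1_0(\Om)$. The first piece is trivially $O(h^2)$ in $L^2$. But $e^0$ has no exact Galerkin orthogonality, so the duality identity carries the consistency terms $a(\bar\Psi-I_h\bar\Psi,w_h-w)$ and $a(\bar\Psi-I_h\bar\Psi,w)$, and the last one is integrated by parts into a boundary pairing with $\partial w/\partial\mathbf n$.

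You instead introduce the continuous solution $\tilde\Psi$ with datum $I_h^\partial(\hat g-G)$. Then $\tilde\Psi-\Psi_h\in H^1_0(\Om)$ has exact orthogonality, so the Aubin--Nitsche step is the textbook one. All boundary effects go into the $a$-harmonic function $z=\Psi-\tilde\Psi$, handled by transposition, $\norm{z}_{0,2}^2=-\es\int_{\partial\Om}(\partial w/\partial\mathbf n)\,z\dd s$. Your fallback $O(h^{3/2})$ would not suffice when $s+\sigma-2>3/2$, so this transposition bound is really needed.

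Both routes therefore hinge on the same ingredient: control of the normal derivative of the dual solution on $\partial\Om$. The paper gets it from Assumption~\ref{ass:reg}(ii) alone. For $\sigma>3/2$ it uses the trace theorem; otherwise it uses an $H^{\sigma-3/2}(\partial\Om)\times H^{3/2-\sigma}(\partial\Om)$ pairing, which gives $h^{\sigma+1/2}\le h^{s+\sigma-2}$. Your $L^2(\partial\Om)$ bound for $\sigma\le 3/2$ is not implied by that assumption. It is true, via a cutoff to a collar of $\partial\Om$ (where $\Gam$ is absent) and the $H^{3/2+\epsilon}$ regularity of the Dirichlet Laplacian on a Lipschitz polyhedron. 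But it is an external result, and you should state it.

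Three smaller points:
\begin{enumerate}
\item You use $\enorm{\tilde\Psi-\Psi_h}\le Ch^{s-1}$ without proof. It follows from the triangle inequality and $\enorm{z}\le\enorm{\bar\Psi-I_h\bar\Psi}\le Ch$ for any smooth extension $\bar\Psi$ of $\hat g-G$, because $z$ minimizes the energy among $H^1$ functions with its trace. Note also that the boundary interpolation error is $O(h^{3/2})$, not $O(h^2)$, in $H^{1/2}(\partial\Om)$.
\item The dual datum for that step should be $\tilde\Psi-\Psi_h$, not $e$.
\item For the energy estimate, your mixed nodal/Scott--Zhang interpolant works but needs its own argument near $\partial\Om$. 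The paper's choice $\chi_h=I_h\bar\Psi+v_h$, with $v_h$ from Lemma~\ref{ass:approx} applied to $\Psi-\bar\Psi\in H^1_0(\Om)\cap\Hs$, reduces it directly to the homogeneous case.
\end{enumerate}
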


\begin{proof}
Since $\hat g-G$ is smooth on $\partial\Om$ and $\Gam\cap\partial\Om=\emptyset$, there is a lifting $\bar\Psi\in C^\infty(\overline\Om)$ with $\bar\Psi=\hat g-G$ on $\partial\Om$ and $\bar\Psi=0$ in a neighborhood of $\Gam$. Then $I_h\bar\Psi\in\Vh$ has the boundary values $I_h^\partial(\hat g-G)$, so that $\Psi_h-I_h\bar\Psi\in\Vhz$, and
\begin{equation}\label{eq:liftint}
\enorm{\bar\Psi-I_h\bar\Psi}+h^{-1}\norm{\bar\Psi-I_h\bar\Psi}_{0,2}+h^{-1}\norm{\bar\Psi-I_h\bar\Psi}_{0,2,\partial\Om}\le C_{\bar\Psi}\,h
\end{equation}
by the standard interpolation estimates in $\Om$ and on the faces of $\partial\Om$, with $C_{\bar\Psi}$ depending on the $C^2$ norm of $\bar\Psi$. Moreover $\Psi-\bar\Psi\in H^1_0(\Om)\cap\Hs$ by Assumption~\ref{ass:reg}(i). Subtracting \eqref{eq:Psih} from \eqref{eq:Psiweak} gives the Galerkin orthogonality
\begin{equation}\label{eq:galerkin}
a(\Psi-\Psi_h,v)=0\qquad\forall v\in\Vhz .
\end{equation}

\emph{Energy estimate.} Let $v_h\in\Vhz$ be the approximation of $\Psi-\bar\Psi$ provided by Lemma~\ref{ass:approx} with $r=s$, and set $\chi_h:=I_h\bar\Psi+v_h\in\Vh$; then $\chi_h-\Psi_h\in\Vhz$, so by \eqref{eq:galerkin}
\[
\enorm{\Psi-\Psi_h}^2=a(\Psi-\Psi_h,\Psi-\chi_h)\le\enorm{\Psi-\Psi_h}\,\enorm{\Psi-\chi_h},
\]
and by \eqref{eq:liftint} and \eqref{eq:approx},
\[
\enorm{\Psi-\chi_h}\le\enorm{\bar\Psi-I_h\bar\Psi}+\enorm{(\Psi-\bar\Psi)-v_h}\le C_{\bar\Psi}h+C_Ah^{s-1}\norm{\Psi-\bar\Psi}_{\Hs}\le Ch^{s-1}.
\]

\emph{$L^2$ estimate.} Write $e:=\Psi-\Psi_h=(\bar\Psi-I_h\bar\Psi)+e^0$ with $e^0:=(\Psi-\bar\Psi)-(\Psi_h-I_h\bar\Psi)\in H^1_0(\Om)$. By \eqref{eq:liftint}, $\norm{\bar\Psi-I_h\bar\Psi}_{0,2}\le Ch^2$, so it suffices to bound $\norm{e^0}_{0,2}$. Let $w\in H^1_0(\Om)$ solve the dual problem \eqref{eq:dual} with $c=0$ and $f=e^0$, and let $w_h\in\Vhz$ be its Galerkin approximation, so that $\enorm{w-w_h}\le Ch^{\sigma-1}\norm{e^0}_{0,2}$ by Lemma~\ref{ass:approx} and Assumption~\ref{ass:reg}(ii). Using \eqref{eq:galerkin} in the form $a(e^0,w_h)=a(e,w_h)-a(\bar\Psi-I_h\bar\Psi,w_h)=-a(\bar\Psi-I_h\bar\Psi,w_h)$, we obtain
\begin{equation}\label{eq:PsiL2split}
\norm{e^0}_{0,2}^2=a(e^0,w)=a(e^0,w-w_h)-a(\bar\Psi-I_h\bar\Psi,w_h-w)-a(\bar\Psi-I_h\bar\Psi,w).
\end{equation}
The first two terms are bounded by $C\big(\enorm{e^0}+\enorm{\bar\Psi-I_h\bar\Psi}\big)\enorm{w-w_h}\le Ch^{s-1}h^{\sigma-1}\norm{e^0}_{0,2}$, where $\enorm{e^0}\le\enorm{e}+\enorm{\bar\Psi-I_h\bar\Psi}\le Ch^{s-1}$ by the energy estimate. For the last term we integrate by parts in $\Dp$ and $\Ds$: since $-\nabla\cdot(\eps\nabla w)=e^0$ in $\Dp\cup\Ds$ and the flux $\eps\,\partial w/\partial\mathbf n$ is continuous across $\Gam$, while $\bar\Psi-I_h\bar\Psi$ vanishes near $\Gam$,
\[
a(\bar\Psi-I_h\bar\Psi,w)=\big(\bar\Psi-I_h\bar\Psi,e^0\big)+\es\int_{\partial\Om}(\bar\Psi-I_h\bar\Psi)\,\frac{\partial w}{\partial\mathbf n}\dd s .
\]
The volume term is bounded by $Ch^2\norm{e^0}_{0,2}$ by \eqref{eq:liftint}. For the boundary term, if $\sigma>3/2$ the trace $\partial w/\partial\mathbf n\in L^2(\partial\Om)$ satisfies $\norm{\partial w/\partial\mathbf n}_{0,2,\partial\Om}\le C\norm{w}_{H^\sigma(\Ds)}\le C\norm{e^0}_{0,2}$, and \eqref{eq:liftint} bounds the term by $Ch^{2}\norm{e^0}_{0,2}$. If $1<\sigma\le3/2$, the conormal derivative is defined by Green's formula as an element of $H^{\sigma-3/2}(\partial\Om)$ with norm bounded by $C\norm{e^0}_{0,2}$, and interpolating the $L^2(\partial\Om)$ and $H^{1/2}(\partial\Om)$ interpolation estimates for $\bar\Psi-I_h\bar\Psi$ on the faces of $\partial\Om$ gives $\norm{\bar\Psi-I_h\bar\Psi}_{H^{3/2-\sigma}(\partial\Om)}\le Ch^{\sigma+1/2}$, so that the term is bounded by $Ch^{\sigma+1/2}\norm{e^0}_{0,2}$. Since $s+\sigma-2\le\min\{\sigma+1/2,\ 2\}$ for $s,\sigma\in(1,2]$, all contributions in \eqref{eq:PsiL2split} are bounded by $Ch^{s+\sigma-2}\norm{e^0}_{0,2}$, which proves the $L^2$ estimate; the last inequality in \eqref{eq:Psierr} uses $h\le1$ and $\sigma-1\ge t$.
\end{proof}

Since $h\le1$ and $s,\sigma>1$, Lemma~\ref{lem:Psih} and \eqref{eq:anorm} give, for every mesh of the family,
\begin{equation}\label{eq:Lambda}
\norm{\Psi_h}_{H^1(\Om)}\le\norm{\Psi}_{H^1(\Om)}+\norm{\Psi-\Psi_h}_{0,2}+\ep^{-1/2}\enorm{\Psi-\Psi_h}
\le\norm{\Psi}_{H^1(\Om)}+\big(1+\ep^{-1/2}\big)C_\Psi=:\Lambda .
\end{equation}
The constant $\Lambda$ does not depend on the mesh; in Section~\ref{sec:twogrid} it is used for both the fine mesh $\mathcal T_h$ and the coarse mesh $\mathcal T_H$. We set $U_h:=G+\Psi_h$.

\subsection{The nonlinear component: discrete problem and stability}

In the discrete problem for $\Phit$, the function $U$ is not available and is replaced by an approximation. To treat the fine and coarse meshes of the two-grid method simultaneously, we consider a generic mesh $\mathcal T_h$ and a generic datum
\begin{equation}\label{eq:Ustar}
U^\ast := G+\Psi^\ast,\qquad \Psi^\ast\in H^1(\Om),\quad \norm{\Psi^\ast}_{H^1(\Om)}\le\Lambda,
\end{equation}
where $\Lambda>0$ is a fixed constant. With $\Lambda$ as in \eqref{eq:Lambda}, both $\Psi^\ast=\Psi_h$ and $\Psi^\ast=\Psi_H$ are admissible. The discrete nonlinear problem reads: find $\Phih\in\Vhz$ such that
\begin{equation}\label{eq:Phih}
a(\Phih,v)+\big(\rt(U^\ast+\Phih),v\big)_{\Ds}=0\qquad\forall v\in\Vhz;
\end{equation}
its well-posedness is established in Lemma~\ref{lem:discrete} below. We write
\begin{equation}\label{eq:delta}
\delta:=\norm{\Psi-\Psi^\ast}_{0,2,\Ds}=\norm{U-U^\ast}_{0,2,\Ds}
\end{equation}
for the size of the data perturbation. The nonlinearity in \eqref{eq:Phih} is the truncated one, in accordance with Remark~\ref{rem:tau}, and that $U^\ast+\Phih$ need not be bounded pointwise.

\begin{lemma}\label{lem:discrete}
Let Assumption~\ref{ass:U} and \eqref{eq:neutral} hold, $\tau>0$, and let $U^\ast$ be as in \eqref{eq:Ustar}. Then \eqref{eq:Phih} has a unique solution $\Phih\in\Vhz$, and
\begin{equation}\label{eq:dstab}
\enorm{\Phih}\le\ep^{-1/2}C_P\norm{\rt(U^\ast)}_{0,2,\Ds}\le C_\Lambda,
\end{equation}
where $C_\Lambda$ depends only on $\tau,\beta,Z_i,\bar c_i,\ep,\Om$, $\norm{G}_{H^1(\Ds)}$ and $\Lambda$, but not on $h$.
\end{lemma}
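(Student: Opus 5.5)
The plan is to mirror the proof of Theorem~\ref{thm:wellposed} in the finite-dimensional space $\Vhz$. First we check that the nonlinear term is well defined for the datum $U^\ast$. In $\Ds$ the function $G$ is smooth and bounded, so $G\in L^\infty(\Ds)$, and $\Psi^\ast\in H^1(\Om)\subset L^6(\Om)$. For $v\in\Vhz$, \eqref{eq:rhogrowth} then gives
\[
|\rt(U^\ast+v)|\le C_0(1+|G|+|\Psi^\ast|+|v|)^2\in L^3(\Ds)
\quad\text{and}\quad
0\le\Pt(U^\ast+v)\le C_0(1+|G|+|\Psi^\ast|+|v|)^3\in L^2(\Ds).
\]
Hence the discrete functional
\[
J_h(v):=\tfrac12 a(v,v)+\int_{\Ds}\Pt(U^\ast+v)\dd\mathbf r
\]
is finite on $\Vhz$. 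Assumption~\ref{ass:U} is not needed for the discrete datum itself, since all bounds come from $G$ and $\Lambda$.

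\emph{Existence and uniqueness.} The functional $J_h$ is strictly convex, because $a$ is coercive on $\Vhz$ and $\Pt$ is convex by Lemma~\ref{lem:rhotau}(i). It is coercive because $J_h(v)\ge\frac12\enorm{v}^2$. It is continuous on the finite-dimensional space $\Vhz$ by dominated convergence, using the growth bounds above. So $J_h$ has a unique minimizer $\Phih$. Gâteaux differentiability follows by dominated convergence, exactly as in Theorem~\ref{thm:wellposed}, and gives $J_h'(v)w=a(v,w)+(\rt(U^\ast+v),w)_{\Ds}$. Thus the minimizer solves \eqref{eq:Phih}. Conversely, any solution of \eqref{eq:Phih} is a critical point of the strictly convex $J_h$, so it equals the minimizer; this gives uniqueness.

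\emph{Stability.} Test \eqref{eq:Phih} with $v=\Phih$ and subtract $(\rt(U^\ast),\Phih)_{\Ds}$ from both sides. Since $\rt$ is increasing, the monotonicity term $(\rt(U^\ast+\Phih)-\rt(U^\ast),\Phih)_{\Ds}$ is nonnegative. Cauchy--Schwarz, \eqref{eq:poincare} and \eqref{eq:anorm} then give the first inequality of \eqref{eq:dstab}. For the second inequality we bound
\[
\norm{\rt(U^\ast)}_{0,2,\Ds}\le C_0\norm{(1+|G|+|\Psi^\ast|)^2}_{0,2,\Ds}
\le C\big(|\Ds|^{1/2}+\norm{G}_{0,4,\Ds}^2+\norm{\Psi^\ast}_{0,4,\Ds}^2\big).
\]
Then we use $\norm{\Psi^\ast}_{0,4}\le C\norm{\Psi^\ast}_{H^1(\Om)}\le C\Lambda$, which is the Sobolev embedding $H^1(\Om)\subset L^4(\Om)$ on the bounded Lipschitz domain $\Om$. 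The function $G$ is bounded in $\Ds$, so its $L^4(\Ds)$ norm is controlled. The resulting constant depends only on $\tau,\beta,Z_i,\bar c_i,\ep,\Om,\Lambda$ and a norm of $G$ on $\Ds$.

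The main obstacle is the last point. The statement lists $\norm{G}_{H^1(\Ds)}$ rather than $\norm{G}_{0,\infty,\Ds}$, and $\Psi^\ast$ does not vanish on $\partial\Om$, so Poincaré's inequality is not available for it. I would handle this with the embedding $H^1(\Ds)\subset L^4(\Ds)$ on the Lipschitz domain $\Ds$. That embedding bounds $\norm{G}_{0,4,\Ds}$ by $C\norm{G}_{H^1(\Ds)}$, and the analogous embedding on $\Om$ handles $\Psi^\ast$. With both terms controlled this way, the dependence of $C_\Lambda$ matches the one stated.
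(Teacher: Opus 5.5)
Your proposal is correct and follows essentially the same route as the paper: minimize the strictly convex, coercive discrete energy on $\Vhz$, test with $\Phih$ and use monotonicity of $\rt$ for the first bound, and obtain the stated dependence on $\norm{G}_{H^1(\Ds)}$ and $\Lambda$ from the Sobolev embedding $H^1(\Ds)\hookrightarrow L^4(\Ds)$. The only cosmetic difference is that you split $U^\ast$ into $G$ and $\Psi^\ast$ and embed $\Psi^\ast$ on $\Om$, whereas the paper bounds $\norm{U^\ast}_{0,4,\Ds}\le C\norm{U^\ast}_{H^1(\Ds)}$ in one step.
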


\begin{proof}
The functional
\[
\Jt^\ast(v):=\tfrac12a(v,v)+\int_{\Ds}\Pt(U^\ast+v)\dd\mathbf r,\qquad v\in\Vhz,
\]
is well defined on $\Vhz$ because $U^\ast\in L^6(\Ds)$ (recall that $G\in C^\infty(\overline{\Ds})$ and $\Psi^\ast\in H^1(\Om)\hookrightarrow L^6(\Om)$) and $v\in L^6(\Om)$, and the argument of Theorem~\ref{thm:wellposed} carries over to the finite-dimensional space $\Vhz$: $\Jt^\ast$ is strictly convex, continuous and coercive, so it has a unique minimizer, which is the unique solution of \eqref{eq:Phih}. The first inequality in \eqref{eq:dstab} is proved exactly as \eqref{eq:stab}. For the second, \eqref{eq:rhogrowth} gives
\[
\norm{\rt(U^\ast)}_{0,2,\Ds}\le C_0\norm{(1+|U^\ast|)^2}_{0,2,\Ds}=C_0\norm{1+|U^\ast|}_{0,4,\Ds}^2\le C_0\big(|\Ds|^{1/4}+\norm{U^\ast}_{0,4,\Ds}\big)^2,
\]
and
\[
\norm{U^\ast}_{0,4,\Ds}\le C\norm{U^\ast}_{H^1(\Ds)}\le C\big(\norm{G}_{H^1(\Ds)}+\Lambda\big)
\]
by the Sobolev embedding on the Lipschitz domain $\Ds$.
\end{proof}

We shall repeatedly use the following elementary consequence of the mean value theorem and Lemma~\ref{lem:rhotau}(ii): for all $\xi_1,\xi_2\in\R$,
\begin{equation}\label{eq:mvt}
|\rt(\xi_1)-\rt(\xi_2)|\le C_1\big(1+|\xi_1|+|\xi_1-\xi_2|\big)|\xi_1-\xi_2|,
\end{equation}
which follows from $\rt(\xi_1)-\rt(\xi_2)=\rt'(\zeta)(\xi_1-\xi_2)$ with $\zeta$ between $\xi_1$ and $\xi_2$, so that $|\zeta|\le|\xi_1|+|\xi_1-\xi_2|$.

\subsection{Energy-norm error estimate}

\begin{theorem}\label{thm:energy}
Let Assumptions~\ref{ass:U} and \ref{ass:reg}(i) hold, let \eqref{eq:neutral} and \eqref{eq:tau0} hold, and let $U^\ast$ be as in \eqref{eq:Ustar}. There exist $h_0>0$ and $C>0$, depending only on $\tau,\beta,Z_i,\bar c_i,\ep,\es,\Om,\MU,\Lambda$, $\norm{G}_{H^1(\Ds)}$, $\norm{\Psi}_{H^1(\Om)}$ and $\norm{\Phit}_{\Hs}$, such that for all $h\le h_0$ the solution $\Phih$ of \eqref{eq:Phih} satisfies
\begin{equation}\label{eq:energy}
\enorm{\Phit-\Phih}\le C\big(h^{s-1}+\delta\big),
\end{equation}
where $\delta$ is defined in \eqref{eq:delta}.
\end{theorem}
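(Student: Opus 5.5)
Since $\rt$ is strictly increasing, the discrete operator is strongly monotone with respect to the energy norm. The plan is therefore a monotonicity (Strang-type) argument that compares $\Phih$ with a quasi-interpolant $\Pi_h\Phit\in\Vhz$, the approximation of $\Phit$ given by Lemma~\ref{ass:approx} with $r=s$. We split $\Phit-\Phih=(\Phit-\Pi_h\Phit)+e_h$ with $e_h:=\Pi_h\Phit-\Phih\in\Vhz$. The first part is bounded by $C_Ah^{s-1}\norm{\Phit}_{\Hs}$, so it remains to bound $\enorm{e_h}$.

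Because $\Phit$ also solves \eqref{eq:Phitau} (Theorem~\ref{thm:equiv}), subtracting \eqref{eq:Phih} from \eqref{eq:Phitau} with $v=e_h$ gives the perturbed Galerkin relation
\[
a(\Phit-\Phih,e_h)+\big(\rt(U+\Phit)-\rt(U^\ast+\Phih),e_h\big)_{\Ds}=0 .
\]
We then write $\rt(U+\Phit)-\rt(U^\ast+\Phih)$ as the sum of three differences:
\[
\big[\rt(U^\ast+\Pi_h\Phit)-\rt(U^\ast+\Phih)\big]+\big[\rt(U+\Pi_h\Phit)-\rt(U^\ast+\Pi_h\Phit)\big]+\big[\rt(U+\Phit)-\rt(U+\Pi_h\Phit)\big].
\]
The first difference, tested with $e_h$, is nonnegative by monotonicity, so it can be dropped. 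This yields
\[
\enorm{e_h}^2\le \enorm{\Phit-\Pi_h\Phit}\enorm{e_h}+\big|\big(\rt(U+\Pi_h\Phit)-\rt(U^\ast+\Pi_h\Phit),e_h\big)_{\Ds}\big|+\big|\big(\rt(U+\Phit)-\rt(U+\Pi_h\Phit),e_h\big)_{\Ds}\big|.
\]

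The third term is handled with \eqref{eq:mvt} using $\xi_1=U+\Phit$. Then $|\xi_1|\le2\MU$ by \eqref{eq:UPhi}, and the difference $\Phit-\Pi_h\Phit$ is controlled in $L^2$ and $L^4$ by $Ch^{s-1}$, since it is $O(h^{s-1})$ in energy norm and $L^4\subset$ Sobolev. By H\"older, $\norm{(1+|\Phit-\Pi_h\Phit|)|\Phit-\Pi_h\Phit|}_{0,6/5}$ is bounded by $C(\norm{\cdot}_{0,2}+\norm{\cdot}_{0,12/5}^2)$, and pairing with $\norm{e_h}_{0,6}\le C\enorm{e_h}$ gives a bound $Ch^{s-1}\enorm{e_h}$.

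The second term contains the data perturbation and is the main obstacle, because $\rt'$ grows linearly. In \eqref{eq:mvt} we take $\xi_1=U+\Pi_h\Phit$ and $\xi_2=U^\ast+\Pi_h\Phit$, so $\xi_1-\xi_2=\Psi^\ast-\Psi$. A naive H\"older bound would put $\Psi-\Psi^\ast$ in $L^3$ or $L^4$, whereas $\delta$ is only an $L^2$ quantity. I would try to avoid this by using the second-order structure instead. Writing
\[
\rt(\xi_1)-\rt(\xi_2)=\rt'(\xi_1)(\xi_1-\xi_2)-\tfrac12\rt''(\zeta)(\xi_1-\xi_2)^2,
\]
the linear part has coefficient $\rt'(U+\Pi_h\Phit)$. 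This coefficient is bounded in $L^\infty$ up to a correction $C_2|\Phit-\Pi_h\Phit|$, because $|\rt''|\le C_2$ and $|U+\Phit|\le2\MU$. Its pairing is therefore bounded by $C\delta\norm{e_h}_{0,2}$ plus $C\norm{\Phit-\Pi_h\Phit}_{0,6}\,\delta\,\norm{e_h}_{0,3}$, both of the form $C\delta\enorm{e_h}$. The quadratic remainder needs $\norm{\Psi-\Psi^\ast}_{0,12/5}^2$. This I would bound by interpolation between $\delta$ and the uniform $H^1$ bound $\norm{\Psi-\Psi^\ast}_{H^1}\le\norm{\Psi}_{H^1}+\Lambda$, giving $C\delta^{3/2}\le C\delta$ up to constants. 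If this does not close cleanly, the fallback is to accept the $L^{12/5}$ norm, since the perturbations actually used (Lemma~\ref{lem:Psih}) satisfy the same rates in $L^p$. The restriction $h\le h_0$ would enter only if some term needs absorbing via smallness of $\enorm{\Phit-\Pi_h\Phit}$.

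Collecting the bounds, dividing by $\enorm{e_h}$, and using the triangle inequality gives \eqref{eq:energy}.
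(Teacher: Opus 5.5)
Your argument is correct, and it uses monotonicity differently from the paper.

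\textbf{The paper's route.} It works with the full error $e=\Phit-\Phih$, writes $\enorm{e}^2=a(e,\eta)+a(e,e-\eta)$ with $\eta=\Phit-v_h$, and drops $-(T_1,e)_{\Ds}\le0$. The price is the cross term $(T_1,\eta)_{\Ds}$. Because $\rt'$ grows linearly, this term contains $\norm{e}_{0,4}^2\norm{\eta}_{0,2}$, which must be absorbed using $\norm{\eta}_{0,2}=O(h^s)$; this is the only source of the restriction $h\le h_0$. The paper also expands the data perturbation $T_2$ around $U+\Phih$, so it needs the discrete stability bound of Lemma~\ref{lem:discrete}, hence the dependence on $C_\Lambda$ and $\norm{G}_{H^1(\Ds)}$.

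\textbf{Your route and what it buys.} You apply monotonicity to the discrete part $e_h=\Pi_h\Phit-\Phih$ and evaluate the consistency and data-perturbation differences at the known function $\Pi_h\Phit$. Every remaining term is then linear in $e_h$. No absorption is needed, the estimate holds for all $h\le1$ (so one may take $h_0=1$), and no a priori bound on $\Phih$ enters. This is a slightly stronger result with a simpler proof.

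\textbf{The data-perturbation term.} Your ``main obstacle'' is not really one. Apply \eqref{eq:mvt} and H\"older with exponents $(3,2,6)$, exactly as the paper does for $T_2$:
\[
\big|\big(\rt(U+\Pi_h\Phit)-\rt(U^\ast+\Pi_h\Phit),e_h\big)_{\Ds}\big|\le C_1\norm{1+|U+\Pi_h\Phit|+|\Psi-\Psi^\ast|}_{0,3,\Ds}\,\delta\,\norm{e_h}_{0,6}\le C\delta\enorm{e_h}.
\]
Only one factor of $\Psi-\Psi^\ast$ has to be measured in $L^2$. The $L^3$ factor is bounded in terms of $\MU$, $\enorm{\Pi_h\Phit}$ and $\norm{\Psi}_{H^1(\Om)}+\Lambda$.

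Your second-order Taylor route also closes. One has $\norm{\Psi-\Psi^\ast}_{0,12/5,\Ds}^2\le\delta^{3/2}\norm{\Psi-\Psi^\ast}_{0,6,\Ds}^{1/2}$. The $L^6$ norm is bounded by the $H^1$ bounds, and $\delta\le\norm{\Psi}_{H^1(\Om)}+\Lambda$, so $\delta^{3/2}\le C\delta$. The fallback is therefore unnecessary. It would also not have given an estimate in terms of $\delta$ as the statement requires.
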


\begin{proof}
Set $e:=\Phit-\Phih$. Subtracting \eqref{eq:Phih} from \eqref{eq:Phitau} (recall $\Phit=\Phi_\tau$ by Theorem~\ref{thm:equiv}) gives the error equation
\begin{equation}\label{eq:erreq}
a(e,\chi)+(T_1+T_2,\chi)_{\Ds}=0\qquad\forall\chi\in\Vhz,
\end{equation}
where
\[
T_1:=\rt(U+\Phit)-\rt(U+\Phih),\qquad T_2:=\rt(U+\Phih)-\rt(U^\ast+\Phih).
\]
Let $v_h\in\Vhz$ be arbitrary and $\eta:=\Phit-v_h$. Then $v_h-\Phih=e-\eta\in\Vhz$, and by \eqref{eq:erreq},
\begin{equation}\label{eq:split}
\begin{aligned}
\enorm{e}^2&=a(e,\eta)+a(e,e-\eta)\\
&=a(e,\eta)-(T_1,e)_{\Ds}-(T_2,e)_{\Ds}+(T_1,\eta)_{\Ds}+(T_2,\eta)_{\Ds}.
\end{aligned}
\end{equation}
Since $\rt$ is increasing,
\[
T_1e=\big(\rt(U+\Phit)-\rt(U+\Phit-e)\big)e\ge0\qquad\text{a.e.\ in }\Ds,
\]
so $-(T_1,e)_{\Ds}\le0$. We bound the remaining terms.

\emph{The term $T_1$.} Applying \eqref{eq:mvt} with $\xi_1=U+\Phit$ and $\xi_2=U+\Phih$, so that $\xi_1-\xi_2=e$, and using \eqref{eq:UPhi},
\[
|T_1|\le C_1(1+2\MU+|e|)|e|\qquad\text{a.e.\ in }\Ds .
\]
Hence, by H\"older's inequality and \eqref{eq:poincare}--\eqref{eq:anorm},
\begin{equation}\label{eq:T1}
\begin{aligned}
|(T_1,\eta)_{\Ds}|&\le C_1(1+2\MU)\norm{e}_{0,2,\Ds}\norm{\eta}_{0,2,\Ds}+C_1\norm{e}_{0,4,\Ds}^2\norm{\eta}_{0,2,\Ds}\\
&\le C\big(\enorm{e}+\enorm{e}^2\big)\norm{\eta}_{0,2}.
\end{aligned}
\end{equation}

\emph{The term $T_2$.} Applying \eqref{eq:mvt} with $\xi_1=U+\Phih$ and $\xi_2=U^\ast+\Phih$, so that $\xi_1-\xi_2=U-U^\ast=\Psi-\Psi^\ast$,
\[
|T_2|\le C_1\big(1+\MU+|\Phih|+|\Psi-\Psi^\ast|\big)|\Psi-\Psi^\ast|\qquad\text{a.e.\ in }\Ds .
\]
For any $w\in H^1_0(\Om)$, H\"older's inequality with exponents $(3,2,6)$ gives
\begin{equation}\label{eq:T2}
\begin{aligned}
|(T_2,w)_{\Ds}|&\le C_1\norm{1+\MU+|\Phih|+|\Psi-\Psi^\ast|}_{0,3,\Ds}\,\norm{\Psi-\Psi^\ast}_{0,2,\Ds}\,\norm{w}_{0,6}\\
&\le C\,\delta\,\enorm{w},
\end{aligned}
\end{equation}
where we used
\[
\norm{\Phih}_{0,3}\le C\enorm{\Phih}\le CC_\Lambda,\qquad
\norm{\Psi-\Psi^\ast}_{0,3,\Ds}\le C\big(\norm{\Psi}_{H^1(\Om)}+\Lambda\big),\qquad
\norm{w}_{0,6}\le C\enorm{w},
\]
by Lemma~\ref{lem:discrete}, the Sobolev embedding $H^1(\Om)\hookrightarrow L^3(\Om)$, and \eqref{eq:poincare}--\eqref{eq:anorm}, respectively.

\emph{Conclusion.} Inserting \eqref{eq:T1} and \eqref{eq:T2} (with $w=e$ and $w=\eta$) into \eqref{eq:split} and using $a(e,\eta)\le\enorm e\enorm\eta$ yields
\begin{equation}\label{eq:pre}
\enorm{e}^2\le\enorm{e}\big(\enorm{\eta}+C\norm{\eta}_{0,2}+C\delta\big)+C\enorm{e}^2\norm{\eta}_{0,2}+C\delta\enorm{\eta}.
\end{equation}
Now choose $v_h$ according to Lemma~\ref{ass:approx} with $r=s$ and $v=\Phit\in H^1_0(\Om)\cap\Hs$, so that
\[
\enorm{\eta}\le C_Ah^{s-1}\norm{\Phit}_{\Hs},\qquad \norm{\eta}_{0,2}\le C_Ah^{s}\norm{\Phit}_{\Hs},
\]
and let $h_0>0$ be such that
\[
C\,C_A\,h_0^{s}\norm{\Phit}_{\Hs}\le\tfrac12,
\]
where $C$ is the constant in front of $\enorm{e}^2\norm{\eta}_{0,2}$ in \eqref{eq:pre}. Then for $h\le h_0$ the term $C\enorm{e}^2\norm{\eta}_{0,2}$ can be absorbed into the left-hand side of \eqref{eq:pre}, and we obtain, using $h^s\le h^{s-1}$ and Young's inequality $\delta h^{s-1}\le\frac12(\delta^2+h^{2(s-1)})$,
\[
\tfrac12\enorm{e}^2\le C\big(h^{s-1}+\delta\big)\enorm{e}+C\big(h^{2(s-1)}+\delta^2\big).
\]
If $x\ge0$ satisfies $x^2\le bx+c$ with $b,c\ge0$, then $x\le b+\sqrt c$. Applying this with $x=\enorm e$ gives \eqref{eq:energy}.
\end{proof}

\begin{remark}
The truncation and the bound \eqref{eq:UPhi} enter the proof as follows. The pointwise bound on $U+\Phit$ gives the linear part of $T_1$ a bounded coefficient. The quadratic growth of $\rt$ makes the rest of $T_1$ quadratic in $e$, so it is controlled by $\norm{e}_{0,4}^2\lesssim\enorm{e}^2$; this term is absorbed for small $h$ because it is multiplied by $\norm{\eta}_{0,2}=O(h^s)$. Unlike \cite[Theorem~3.5]{HolstSzypowskiZhu2013}, no pointwise bound on $\Phih$ is used, and hence no condition on the mesh beyond shape regularity is needed. In return we need $h\le h_0$, as is usual for this kind of argument (cf.\ \cite{BankHolstSzypowskiZhu2011}).
\end{remark}

\subsection{\texorpdfstring{$L^2$ and $L^4$}{L2 and L4} error estimates}

\begin{theorem}[$L^2$ estimate]\label{thm:L2}
Under the hypotheses of Theorem~\ref{thm:energy}, let in addition Assumption~\ref{ass:reg}(ii) hold. Then for all $h\le h_0$,
\begin{equation}\label{eq:L2}
\norm{\Phit-\Phih}_{0,2}\le C\big(h^{\sigma-1}\enorm{\Phit-\Phih}+\enorm{\Phit-\Phih}^2+\delta\big),
\end{equation}
with $C$ depending on the same quantities as in Theorem~\ref{thm:energy} and on $C_R$.
\end{theorem}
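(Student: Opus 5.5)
We use an Aubin--Nitsche duality argument built on the linearization of the nonlinearity at the exact solution. Set $e:=\Phit-\Phih$ and $c:=\rt'(U+\Phit)$ in $\Ds$. By \eqref{eq:cbound}, $0<c\le c_{\max}$, so Assumption~\ref{ass:reg}(ii) applies. Let $w\in H^1_0(\Om)$ solve the dual problem \eqref{eq:dual} with this $c$ and $f=e$. Then $\norm{w}_{\Hsig}\le C_R\norm{e}_{0,2}$, and testing with $w$ gives $\enorm{w}\le C\norm{e}_{0,2}$. Let $w_h\in\Vhz$ be the approximation from Lemma~\ref{ass:approx} with $r=\sigma$, so that $\enorm{w-w_h}\le C_Ah^{\sigma-1}C_R\norm{e}_{0,2}$ and $\enorm{w_h}\le C\norm{e}_{0,2}$.

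Taking $v=e$ in \eqref{eq:dual}, we have $\norm{e}_{0,2}^2=a(e,w)+(ce,w)_{\Ds}$. We rewrite this with the error equation \eqref{eq:erreq} for $\chi=w_h$:
\[
\norm{e}_{0,2}^2=a(e,w-w_h)+(ce-T_1,w-w_h)_{\Ds}+(ce-T_1,w)_{\Ds}\cdot 0+\big[(ce,w)_{\Ds}-(T_1,w)_{\Ds}\big]+(T_1,w-w_h)_{\Ds}-(T_2,w_h)_{\Ds},
\]
which simplifies to
\[
\norm{e}_{0,2}^2=a(e,w-w_h)+(ce-T_1,w)_{\Ds}+(T_1,w-w_h)_{\Ds}-(T_2,w_h)_{\Ds}.
\]
We bound the four terms in turn.
\begin{itemize}
\item[(a)] The first term satisfies $a(e,w-w_h)\le Ch^{\sigma-1}\enorm e\norm e_{0,2}$.
\item[(b)] For the linearization remainder $ce-T_1$, Taylor's formula with the bound $|\rt''|\le C_2$ from Lemma~\ref{lem:rhotau}(ii) gives $|T_1-ce|\le\tfrac12C_2e^2$ pointwise. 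Hence $|(ce-T_1,w)_{\Ds}|\le C\norm e_{0,4}^2\norm w_{0,2}\le C\enorm e^2\norm e_{0,2}$. This is the step where the $C^2$ truncation is essential: because the second derivative is globally bounded, no $L^\infty$ bound on $\Phih$ is needed.
\item[(c)] For the third term we use $|T_1|\le C(1+|e|)|e|$ as in the proof of Theorem~\ref{thm:energy}. H\"older's inequality and $\norm{w-w_h}_{0,6}\le C\enorm{w-w_h}$ give $|(T_1,w-w_h)_{\Ds}|\le C(\enorm e+\enorm e^2)h^{\sigma-1}\norm e_{0,2}$. The quadratic piece is bounded by $Ch^{\sigma-1}\enorm e^2\le C\enorm e^2$, using $h\le1$.
\item[(d)] By \eqref{eq:T2}, $|(T_2,w_h)_{\Ds}|\le C\delta\enorm{w_h}\le C\delta\norm e_{0,2}$.
\end{itemize}

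Dividing by $\norm e_{0,2}$ yields \eqref{eq:L2}. The constants depend on $C_R$ through the dual regularity, and on the quantities of Theorem~\ref{thm:energy} through $C_\Lambda$, $\MU$ and $\tau$.

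The main subtlety is term (d). A naive bound would give $\delta$ times an $H^1$ norm of $w_h$ with an extra $L^3$ norm of $\Phih$. This is harmless only because of Lemma~\ref{lem:discrete}, which bounds $\enorm{\Phih}$ uniformly in $h$; that is why the perturbation enters only linearly through $\delta$. A sharper treatment, which would replace $w_h$ by $w$ plus a correction, is not needed for the stated form. We also need $h\le h_0$ only to invoke the quantities already controlled in Theorem~\ref{thm:energy}.
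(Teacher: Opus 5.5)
Your proof is correct and is essentially the paper's duality argument: the dual problem is linearized at $\Phit$, the Taylor remainder is controlled by $|\rt''|\le C_2$, and \eqref{eq:T2} handles the data term. The one difference is the choice of $w_h$. The paper takes the Galerkin approximation of $w$ for $B(\cdot,\cdot)$ and writes $\norm{e}_{0,2}^2=B(e,w-w_h)+(ce-T_1,w_h)_{\Ds}-(T_2,w_h)_{\Ds}$, which avoids your extra term $(T_1,w-w_h)_{\Ds}$ and with it the growth bound on $T_1$. Your first displayed identity is wrong as written: it contains a spurious term $(ce-T_1,w-w_h)_{\Ds}$. The simplified identity you actually use is correct, since it follows directly from $a(e,w_h)=-(T_1+T_2,w_h)_{\Ds}$.
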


\begin{proof}
Let $e=\Phit-\Phih$ and let $w\in H^1_0(\Om)$ be the solution of the dual problem \eqref{eq:dual} with $f=e$, where $c=\rt'(U+\Phit)$ satisfies $0<c\le c_{\max}$ by \eqref{eq:cbound}. The bilinear form $B(v,w):=a(v,w)+(cv,w)_{\Ds}$ satisfies
\[
B(v,v)\ge\enorm{v}^2,\qquad B(v,w)\le\big(1+c_{\max}C_P^2\ep^{-1}\big)\enorm v\,\enorm w\qquad\forall v,w\in H^1_0(\Om).
\]
Let $w_h\in\Vhz$ be the Galerkin approximation of $w$, $B(v_h,w_h)=(e,v_h)$ for all $v_h\in\Vhz$. By C\'ea's lemma, Lemma~\ref{ass:approx} with $r=\sigma$ and Assumption~\ref{ass:reg}(ii) with $c=\rt'(U+\Phit)$,
\begin{equation}\label{eq:dualerr}
\enorm{w-w_h}\le C\inf_{v_h\in\Vhz}\enorm{w-v_h}\le Ch^{\sigma-1}\norm{w}_{\Hsig}\le Ch^{\sigma-1}\norm{e}_{0,2},
\end{equation}
and, taking $v_h=w_h$ in the discrete dual problem,
\begin{equation}\label{eq:whbound}
\enorm{w_h}^2\le B(w_h,w_h)=(e,w_h)\le\norm{e}_{0,2}\norm{w_h}_{0,2}\le C\norm{e}_{0,2}\enorm{w_h},
\quad\text{so that}\quad \enorm{w_h}\le C\norm{e}_{0,2}.
\end{equation}
Taking $v=e$ in \eqref{eq:dual} (recall that $f=e$) and using the error equation \eqref{eq:erreq} with $\chi=w_h$, we get
\begin{align}
\norm{e}_{0,2}^2 &= B(e,w)=B(e,w-w_h)+a(e,w_h)+(ce,w_h)_{\Ds}\notag\\
&= B(e,w-w_h)+\big(ce-T_1,w_h\big)_{\Ds}-\big(T_2,w_h\big)_{\Ds}.\label{eq:L2split}
\end{align}
For the first term, \eqref{eq:dualerr} gives
\[
|B(e,w-w_h)|\le C\enorm e\,\enorm{w-w_h}\le Ch^{\sigma-1}\enorm e\,\norm{e}_{0,2}.
\]
For the second term we use Taylor's formula with integral remainder and $\Phih=\Phit-e$: a.e.\ in $\Ds$,
\[
\rt(U+\Phit-e)=\rt(U+\Phit)-\rt'(U+\Phit)e+e^2\int_0^1(1-\vartheta)\,\rt''(U+\Phit-\vartheta e)\dd\vartheta,
\]
so that
\[
ce-T_1=\rt'(U+\Phit)e-\big(\rt(U+\Phit)-\rt(U+\Phit-e)\big)=e^2\int_0^1(1-\vartheta)\,\rt''(U+\Phit-\vartheta e)\dd\vartheta .
\]
Since $|\rt''|\le C_2$ by \eqref{eq:rhogrowth}, we get $|ce-T_1|\le\frac12C_2e^2$, and H\"older's inequality, \eqref{eq:poincare} and \eqref{eq:whbound} yield
\[
|(ce-T_1,w_h)_{\Ds}|\le\tfrac12C_2\norm{e}_{0,4}^2\norm{w_h}_{0,2}\le C\enorm{e}^2\enorm{w_h}\le C\enorm{e}^2\norm{e}_{0,2}.
\]
For the third term, \eqref{eq:T2} and \eqref{eq:whbound} give
\[
|(T_2,w_h)_{\Ds}|\le C\delta\enorm{w_h}\le C\delta\norm{e}_{0,2}.
\]
Collecting the three bounds in \eqref{eq:L2split} and dividing by $\norm{e}_{0,2}$ proves \eqref{eq:L2}.
\end{proof}

The quadratic term in \eqref{eq:L2} is the place where the bounded second derivative of the truncated nonlinearity is used; with the exponential itself this step would need pointwise bounds on $\Phih$, as in \cite[Theorem~3.7]{HolstSzypowskiZhu2013}.

\begin{corollary}[$L^2$ and $L^4$ estimates]\label{cor:L4}
Under the hypotheses of Theorem~\ref{thm:L2}, suppose in addition that $\delta\le C_\delta h^{s-1+t}$. Then for all $h\le h_0$, with $C$ depending additionally on $C_\delta$,
\begin{equation}\label{eq:L2L4}
\enorm{\Phit-\Phih}\le Ch^{s-1},\qquad \norm{\Phit-\Phih}_{0,2}\le Ch^{s-1+t},\qquad \norm{\Phit-\Phih}_{0,4}\le Ch^{s-1+t/4},
\end{equation}
where $t$ is given by \eqref{eq:t}.
\end{corollary}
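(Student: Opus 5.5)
The plan is to chain the three earlier estimates: first the energy bound, then the $L^2$ bound from it, then the $L^4$ bound by interpolation.

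\emph{Energy estimate.} By Theorem~\ref{thm:energy}, $\enorm{\Phit-\Phih}\le C(h^{s-1}+\delta)$. The assumption $\delta\le C_\delta h^{s-1+t}$ together with $h\le 1$ and $t>0$ gives $\delta\le C_\delta h^{s-1}$. This yields the first bound in \eqref{eq:L2L4}.

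\emph{$L^2$ estimate.} Insert the energy bound into \eqref{eq:L2} of Theorem~\ref{thm:L2}. The three terms are bounded as follows:
\begin{itemize}
\item the linear term: $h^{\sigma-1}\enorm{e}\le Ch^{s-1+\sigma-1}\le Ch^{s-1+t}$, since $\sigma-1\ge t$ by \eqref{eq:t};
\item the quadratic term: $\enorm{e}^2\le Ch^{2(s-1)}\le Ch^{s-1+t}$, since $s-1\ge t$ and $h\le1$;
\item the data term: $\delta\le C_\delta h^{s-1+t}$ directly by hypothesis.
\end{itemize}
Summing gives $\norm{e}_{0,2}\le Ch^{s-1+t}$.

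\emph{$L^4$ estimate.} Here $e=\Phit-\Phih\in H^1_0(\Om)$, so the Gagliardo--Nirenberg inequality \eqref{eq:GN} applies. Combined with \eqref{eq:anorm} it gives
\[
\norm{e}_{0,4}\le C_L\norm{e}_{0,2}^{1/4}\norm{\nabla e}_{0,2}^{3/4}\le C\norm{e}_{0,2}^{1/4}\enorm{e}^{3/4}.
\]
Substituting the two bounds already obtained,
\[
\norm{e}_{0,4}\le C\,h^{(s-1+t)/4}h^{3(s-1)/4}=Ch^{s-1+t/4}.
\]

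No step is a real obstacle. The only point needing care is the quadratic term in the $L^2$ estimate, where one must use $t\le s-1$ (immediate from $t=\min\{s,\sigma\}-1$) so that $h^{2(s-1)}$ is dominated by $h^{s-1+t}$. The constants depend additionally on $C_\delta$ only through the $\delta$ terms, and the restriction $h\le h_0$ is inherited from Theorems~\ref{thm:energy} and \ref{thm:L2}.
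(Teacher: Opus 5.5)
Your proposal is correct and follows the paper's proof essentially step for step. Like the paper, you obtain the energy bound from Theorem~\ref{thm:energy} using $\delta\le C_\delta h^{s-1}$. You then insert it into \eqref{eq:L2} using $t\le\sigma-1$ and $t\le s-1$, and derive the $L^4$ bound from \eqref{eq:GN} and \eqref{eq:anorm}.
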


\begin{proof}
The first estimate follows from \eqref{eq:energy} since $\delta\le C_\delta h^{s-1+t}\le C_\delta h^{s-1}$. Inserting it into \eqref{eq:L2} gives
\[
\norm{\Phit-\Phih}_{0,2}\le C\big(h^{\sigma-1+s-1}+h^{2(s-1)}+h^{s-1+t}\big)\le Ch^{s-1+t},
\]
since $t\le\sigma-1$ and $t\le s-1$. Finally, \eqref{eq:GN} and \eqref{eq:anorm} give
\[
\norm{\Phit-\Phih}_{0,4}\le C_L\norm{\Phit-\Phih}_{0,2}^{1/4}\norm{\nabla(\Phit-\Phih)}_{0,2}^{3/4}\le C\big(h^{s-1+t}\big)^{1/4}\big(h^{s-1}\big)^{3/4}=Ch^{s-1+t/4}.\qedhere
\]
\end{proof}

Applying Corollary~\ref{cor:L4} with $\Psi^\ast=\Psi_h$ and Lemma~\ref{lem:Psih}, so that $\delta=\norm{\Psi-\Psi_h}_{0,2,\Ds}\le C_\Psi h^{s-1+t}$, we obtain the error estimates for the finite element solution decomposition scheme of \cite[Algorithm~1]{Xie2014}.

\begin{corollary}[error of the fine-mesh solution]\label{cor:fine}
Let Assumptions~\ref{ass:U} and \ref{ass:reg} hold, let $\hat g$ be smooth, let \eqref{eq:neutral} and \eqref{eq:tau0} hold, and let $\Phih$ be the solution of \eqref{eq:Phih} with $U^\ast=U_h=G+\Psi_h$. Then for $h\le h_0$ the finite element solution $u_h:=G+\Psi_h+\Phih$ satisfies
\begin{equation}\label{eq:uh}
\enorm{u-u_h}\le Ch^{s-1},\qquad\norm{u-u_h}_{0,2}\le Ch^{s-1+t}.
\end{equation}
\end{corollary}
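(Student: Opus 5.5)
The estimate follows by the triangle inequality from the decomposition $u-u_h=(\Psi-\Psi_h)+(\Phit-\Phih)$, because the singular part $G$ appears identically in $u=G+\Psi+\Phit$ and in $u_h=G+\Psi_h+\Phih$ and cancels. The plan is to bound the linear and nonlinear components separately, with Lemma~\ref{lem:Psih} for the first and Corollary~\ref{cor:L4} for the second. One point needs care: $u-u_h$ is not in $H^1_0(\Om)$, since $\Psi$ and $\Psi_h$ have different boundary values. We therefore read $\enorm{\cdot}$ as the seminorm $a(\cdot,\cdot)^{1/2}$ on $H^1(\Om)$. This seminorm still satisfies the triangle inequality, and Lemma~\ref{lem:Psih} already uses it in this sense for $\Psi-\Psi_h$.

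\emph{Linear component.} Lemma~\ref{lem:Psih} gives directly
\[
\enorm{\Psi-\Psi_h}\le C_\Psi h^{s-1},\qquad \norm{\Psi-\Psi_h}_{0,2}\le C_\Psi h^{s-1+t}.
\]

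\emph{Nonlinear component.} We apply the results of the previous subsections with $U^\ast=U_h$. First we check that this datum is admissible in \eqref{eq:Ustar}: by \eqref{eq:Lambda}, $\norm{\Psi_h}_{H^1(\Om)}\le\Lambda$ holds uniformly in $h$. Next we bound the perturbation:
\[
\delta=\norm{\Psi-\Psi_h}_{0,2,\Ds}\le\norm{\Psi-\Psi_h}_{0,2}\le C_\Psi h^{s-1+t},
\]
again by Lemma~\ref{lem:Psih}. This is exactly the hypothesis of Corollary~\ref{cor:L4} with $C_\delta=C_\Psi$. The hypotheses of Theorems~\ref{thm:energy} and~\ref{thm:L2} are part of the present assumptions. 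Corollary~\ref{cor:L4} therefore yields, for $h\le h_0$,
\[
\enorm{\Phit-\Phih}\le Ch^{s-1},\qquad \norm{\Phit-\Phih}_{0,2}\le Ch^{s-1+t}.
\]
The threshold $h_0$ and the constant depend only on the quantities listed in Theorem~\ref{thm:energy}. Because $\Lambda$ is mesh-independent, $h_0$ does not depend on $h$.

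\emph{Conclusion.} Adding the bounds for the two components gives both estimates in \eqref{eq:uh}. There is no real obstacle in this argument. The only points to verify are that $\Psi^\ast=\Psi_h$ stays in the admissible class uniformly in $h$, which is \eqref{eq:Lambda}, and that the $L^2$ error of the linear component is of order $h^{s-1+t}$. The second point is what makes the perturbation $\delta$ negligible in both the energy and the $L^2$ estimates for $\Phit$.
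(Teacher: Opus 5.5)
Your proposal is correct and follows the paper's proof: you split $u-u_h=(\Psi-\Psi_h)+(\Phit-\Phih)$, bound the first part by Lemma~\ref{lem:Psih}, and bound the second by Corollary~\ref{cor:L4} with $\Psi^\ast=\Psi_h$ and $\delta\le C_\Psi h^{s-1+t}$. You also note, as the paper does, that the energy seminorm is still defined on $H^1(\Om)$ for the difference $\Psi-\Psi_h$, which is not in $H^1_0(\Om)$.
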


\begin{proof}
Since $u-u_h=(\Psi-\Psi_h)+(\Phit-\Phih)$, the claim follows from \eqref{eq:Psierr} and \eqref{eq:L2L4}; note that $\Psi-\Psi_h\notin H^1_0(\Om)$ in general, but $\enorm{\cdot}$ is still defined on $H^1(\Om)$ by \eqref{eq:a}.
\end{proof}

\begin{remark}\label{rem:higherorder}
If $\Vh$ consists of continuous piecewise polynomials of degree $k\ge1$ and the solutions have piecewise $H^{s}$ regularity with $1<s\le k+1$, then Lemma~\ref{ass:approx} holds with $r=s$ \cite{LiMelenkWohlmuthZou2010}, and all the results of this section and of Section~\ref{sec:twogrid} hold with the same proofs. For molecular surfaces, however, the regularity of $\Phit$ is limited, and higher-order elements pay off only if the interface is also approximated to higher order \cite{Xie2014}.
\end{remark}

\section{The two-grid algorithm and its analysis}
\label{sec:twogrid}

\subsection{The algorithm}

Let $\mathcal T_H$ be a coarse mesh with mesh size $H$ from the family introduced in Section~\ref{sec:fem}, and let $\mathcal T_h$, $h<H$, be obtained from $\mathcal T_H$ by (possibly repeated) uniform refinement that keeps it in the family (e.g., red refinement of tetrahedra), so that $\mathcal T_h$ is fitted to $\Gam$ and the finite element spaces are nested, $\VHz\subset\Vhz$. The two-grid solution decomposition scheme is described in Algorithm~\ref{alg:tg}. In Step~3 the datum $\Psi^\sharp$ used to assemble the coarse nonlinear problem may be either the fine-mesh approximation $\Psi_h$ or a coarse-mesh approximation $\Psi_H\in\VH$ defined by \eqref{eq:Psih} on $\mathcal T_H$; both choices are covered by the analysis below (see Remark~\ref{rem:variants} for a discussion).

\begin{algorithm}[htb]
\caption{Two-grid PBE solution decomposition scheme}
\label{alg:tg}
\begin{algorithmic}[1]
\Require atomic positions $\mathbf r_j$ and charges $z_j$; ionic species $(Z_i,\bar c_i)$; permittivities $\ep,\es$; boundary value $\hat g$; truncation level $\tau$; meshes $\mathcal T_H$, $\mathcal T_h$.
\State Compute $G$ and $\nabla G$ from \eqref{eq:G}.
\State \textbf{(Fine linear solve.)} Find $\Psi_h\in\Vh$ with $\Psi_h=I_h^\partial(\hat g-G)$ on $\partial\Om$ satisfying \eqref{eq:Psih}; set $U_h:=G+\Psi_h$. Choose $\Psi^\sharp\in\{\Psi_h,\Psi_H\}$, where $\Psi_H\in\VH$ is computed by \eqref{eq:Psih} on $\mathcal T_H$ if this variant is chosen, and set $U^\sharp:=G+\Psi^\sharp$.
\State \textbf{(Coarse nonlinear solve.)} Find $\PhiH\in\VHz$ such that
\begin{equation}\label{eq:coarse}
a(\PhiH,v)+\big(\rt(U^\sharp+\PhiH),v\big)_{\Ds}=0\qquad\forall v\in\VHz .
\end{equation}
\State \textbf{(Fine linear solve: one Newton correction.)} Find $\Phitg\in\Vhz$ such that for all $v\in\Vhz$
\begin{equation}\label{eq:fine}
a(\Phitg,v)+\big(\rt'(U_h+\PhiH)\,\Phitg,v\big)_{\Ds}=\big(\rt'(U_h+\PhiH)\,\PhiH-\rt(U_h+\PhiH),v\big)_{\Ds}.
\end{equation}
\Ensure $\utg:=G+\Psi_h+\Phitg$.
\end{algorithmic}
\end{algorithm}

The coarse problem \eqref{eq:coarse} is problem \eqref{eq:Phih} on $\mathcal T_H$ with datum $\Psi^\ast=\Psi^\sharp$. It has a unique solution by Lemma~\ref{lem:discrete} and can be solved, for example, by the modified Newton minimization scheme of \cite[Algorithm~2]{Xie2014}; for the truncated nonlinearity this is a descent method for the strictly convex functional $\Jt^\ast$ of Lemma~\ref{lem:discrete} with $U^\ast=U^\sharp$ and, with a suitable line search, converges from any initial guess. Writing $\Phitg=\PhiH+d_h$, the fine problem \eqref{eq:fine} is equivalent to the Newton correction equation
\begin{equation}\label{eq:newton}
a(d_h,v)+\big(\rt'(U_h+\PhiH)\,d_h,v\big)_{\Ds}=-\Big[a(\PhiH,v)+\big(\rt(U_h+\PhiH),v\big)_{\Ds}\Big]\qquad\forall v\in\Vhz,
\end{equation}
whose right-hand side is the residual of the coarse solution in the fine problem \eqref{eq:Phih} with $U^\ast=U_h$. The bilinear form $B_h(v,w):=a(v,w)+(\rt'(U_h+\PhiH)v,w)_{\Ds}$ on the left of \eqref{eq:newton} is symmetric and, since $\rt'>0$, it satisfies
\begin{equation}\label{eq:Bh}
\enorm{v}^2\le B_h(v,v)\le\Big(1+C_1\ep^{-1}C_P^2\big(1+\norm{U_h+\PhiH}_{0,\infty,\Ds}\big)\Big)\enorm{v}^2\qquad\forall v\in\Vhz,
\end{equation}
where the upper bound uses \eqref{eq:rhogrowth} and the fact that $U_h+\PhiH$ is bounded on $\Ds$ ($G$ is smooth on $\overline{\Ds}$ and $\Psi_h+\PhiH$ is piecewise linear). Thus only two symmetric positive definite systems are solved on the fine mesh, one for $\Psi_h$ and one for $d_h$, and both can be solved by preconditioned conjugate gradients with multilevel preconditioners that are robust with respect to the jump of $\eps$ \cite{XuZhu2008}. The coefficient $\rt'(U_h+\PhiH)$ in \eqref{eq:fine} is bounded on each element but may be large where the truncation is active; the analysis does not use an upper bound for it. Since $\PhiH\in\Vhz$, Step~4 is one Newton step on the fine mesh started from the coarse solution, cf.\ \cite{Xu1996,HolstSzypowskiZhu2013}.

\subsection{Error analysis}

Throughout this subsection, $\Phih\in\Vhz$ denotes the solution of the fine-mesh nonlinear problem \eqref{eq:Phih} with $U^\ast=U_h$, i.e., the ``exact'' fine-mesh finite element solution, and $u_h=G+\Psi_h+\Phih$ as in Corollary~\ref{cor:fine}. The first lemma is the analogue of \cite[Lemma~4.1]{HolstSzypowskiZhu2013} and shows that the two-grid solution differs from the fine-mesh solution by a quantity that is quadratic in the coarse-mesh error.

\begin{lemma}\label{lem:tg}
Let Assumption~\ref{ass:U} and \eqref{eq:neutral} hold and $\tau>0$. Let $\PhiH\in\VHz\subset\Vhz$ be any function and let $\Phitg\in\Vhz$ be defined by \eqref{eq:fine}. Then
\begin{equation}\label{eq:tglemma}
\enorm{\Phih-\Phitg}\le\tfrac12C_2C_P\ep^{-1/2}\,\norm{\Phih-\PhiH}_{0,4}^2 .
\end{equation}
\end{lemma}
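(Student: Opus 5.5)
The plan is the standard two-grid argument: take the difference of the fine nonlinear equation and the linearized equation \eqref{eq:fine}, and recognize the remainder as a Taylor remainder of $\rt$ about $U_h+\PhiH$.

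\emph{Error equation.} Set $e:=\Phih-\Phitg\in\Vhz$ and $\xi:=U_h+\PhiH$, and abbreviate $c_H:=\rt'(\xi)$. Problem \eqref{eq:Phih} with $U^\ast=U_h$ states that $a(\Phih,v)+(\rt(U_h+\Phih),v)_{\Ds}=0$. Subtracting \eqref{eq:fine} from this gives, for all $v\in\Vhz$,
\[
a(e,v)+(c_H e,v)_{\Ds}=-\big(\rt(U_h+\Phih)-\rt(\xi)-c_H(\Phih-\PhiH),v\big)_{\Ds}=:-(R,v)_{\Ds}.
\]
Indeed, moving the term $c_H\Phih$ to the left produces $c_H(\Phih-\Phitg)$, and the right-hand side is what remains.

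\emph{Pointwise bound on $R$.} Apply Taylor's formula with integral remainder, exactly as in the proof of Theorem~\ref{thm:L2}, with $d:=\Phih-\PhiH$:
\[
R=d^2\int_0^1(1-\vartheta)\,\rt''(\xi+\vartheta d)\dd\vartheta .
\]
By Lemma~\ref{lem:rhotau}(ii), this gives $|R|\le\frac12C_2d^2$ a.e.\ in $\Ds$. Only $\rt\in C^2$ with $|\rt''|\le C_2$ is used here, not any bound on $\xi$. This is why the lemma needs no restriction on $\PhiH$ and no value of $\tau$ beyond $\tau>0$.

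\emph{Energy estimate.} Test with $v=e$ and drop the nonnegative term $(c_He,e)_{\Ds}$, which is nonnegative because $\rt'>0$. Cauchy--Schwarz then gives
\[
\enorm{e}^2\le\tfrac12C_2\norm{d^2}_{0,2,\Ds}\norm{e}_{0,2}\le\tfrac12C_2\norm{d}_{0,4}^2\,C_P\ep^{-1/2}\enorm{e},
\]
using \eqref{eq:poincare}, \eqref{eq:anorm} and $\norm{d^2}_{0,2}=\norm{d}_{0,4}^2$. Dividing by $\enorm{e}$ (the case $e=0$ is trivial) yields \eqref{eq:tglemma} with exactly the stated constant.

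Two points need checking before the argument is complete. The first is that \eqref{eq:fine} has a solution at all. This holds because $U_h+\PhiH$ is bounded on $\Ds$, so $B_h$ is a bounded coercive symmetric form on the finite-dimensional space $\Vhz$ by \eqref{eq:Bh}. The second is that all the integrals are finite, which they are since everything lies in $L^6$ or $L^\infty$. Apart from these, I expect no real obstacle: the main point is to write the Taylor remainder correctly, expanding about $\xi$ in the direction $d$.
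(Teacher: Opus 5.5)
Your proposal is correct and follows essentially the same route as the paper: you derive the error equation $a(e,v)+(\rt'(U_h+\PhiH)e,v)_{\Ds}=-(R,v)_{\Ds}$ with $R$ the second-order Taylor remainder about $U_h+\PhiH$, bound $|R|\le\tfrac12C_2(\Phih-\PhiH)^2$, test with $e$, drop the nonnegative reaction term and apply Cauchy--Schwarz with Poincar\'e. The paper does the same, only writing the Taylor expansion of the fine nonlinear equation first and then adding the rewritten correction equation, and it obtains the identical constant.
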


\begin{proof}
Write $b(\xi):=\rt(U_h+\xi)$ for the fine-mesh nonlinearity, so that $b'(\xi)=\rt'(U_h+\xi)$ and $|b''(\xi)|\le C_2$ a.e.\ in $\Ds$ by \eqref{eq:rhogrowth}. Let $\chi\in\Vhz$ and put $\theta:=\Phih-\PhiH\in\Vhz$. By Taylor's formula with integral remainder, a.e.\ in $\Ds$,
\[
b(\Phih)=b(\PhiH)+b'(\PhiH)\theta+\int_0^1(1-\vartheta)\,b''(\PhiH+\vartheta\theta)\,\theta^2\dd\vartheta .
\]
Testing the fine problem \eqref{eq:Phih} (with $U^\ast=U_h$) with $\chi$ and inserting this expansion,
\begin{equation}\label{eq:taylor}
\begin{aligned}
0&=a(\Phih,\chi)+\big(b(\Phih),\chi\big)_{\Ds}\\
&=a(\PhiH,\chi)+\big(b(\PhiH),\chi\big)_{\Ds}+a(\theta,\chi)+\big(b'(\PhiH)\theta,\chi\big)_{\Ds}+R(\chi),
\end{aligned}
\end{equation}
where
\[
R(\chi):=\int_0^1(1-\vartheta)\big(b''(\PhiH+\vartheta\theta)\theta^2,\chi\big)_{\Ds}\dd\vartheta
\]
satisfies
\begin{equation}\label{eq:R}
|R(\chi)|\le\tfrac12C_2\norm{\theta}_{0,4}^2\norm{\chi}_{0,2}.
\end{equation}
On the other hand, \eqref{eq:fine} can be written as
\[
a(\Phitg-\PhiH,\chi)+\big(b'(\PhiH)(\Phitg-\PhiH),\chi\big)_{\Ds}=-a(\PhiH,\chi)-\big(b(\PhiH),\chi\big)_{\Ds}.
\]
Adding this identity to \eqref{eq:taylor} and rearranging gives, with $\omega:=\Phih-\Phitg=\theta-(\Phitg-\PhiH)$,
\[
a(\omega,\chi)+\big(b'(\PhiH)\omega,\chi\big)_{\Ds}=-R(\chi)\qquad\forall\chi\in\Vhz.
\]
Choosing $\chi=\omega$, using $b'>0$, \eqref{eq:R}, \eqref{eq:poincare} and \eqref{eq:anorm},
\[
\enorm{\omega}^2\le a(\omega,\omega)+\big(b'(\PhiH)\omega,\omega\big)_{\Ds}=-R(\omega)\le\tfrac12C_2\norm{\theta}_{0,4}^2\,C_P\ep^{-1/2}\enorm{\omega},
\]
which gives \eqref{eq:tglemma} after dividing by $\enorm\omega$.
\end{proof}

We now combine Lemma~\ref{lem:tg} with the $L^4$ estimates of Corollary~\ref{cor:L4}, applied on both meshes.

\begin{theorem}\label{thm:twogrid}
Let Assumptions~\ref{ass:U} and \ref{ass:reg} hold, let $\hat g$ be smooth, let \eqref{eq:neutral} and \eqref{eq:tau0} hold, and assume that the coarse datum in Algorithm~\ref{alg:tg} satisfies
\begin{equation}\label{eq:sharp}
\norm{\Psi^\sharp}_{H^1(\Om)}\le\Lambda,\qquad \norm{\Psi-\Psi^\sharp}_{0,2,\Ds}\le C_\Psi H^{s-1+t},
\end{equation}
which holds for $\Psi^\sharp=\Psi_h$ and for $\Psi^\sharp=\Psi_H$ by \eqref{eq:Lambda} and Lemma~\ref{lem:Psih}. Then there are constants $h_0>0$ and $C>0$, independent of $h$ and $H$, such that for all $h<H\le h_0$ the two-grid solution $\utg$ of Algorithm~\ref{alg:tg} satisfies
\begin{equation}\label{eq:tgmain}
\enorm{u-\utg}\le C\Big(h^{s-1}+H^{2(s-1)+t/2}\Big),\qquad t=\min\{s,\sigma\}-1 .
\end{equation}
\end{theorem}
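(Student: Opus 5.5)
The plan is to split the two-grid error with the triangle inequality:
\[
\enorm{u-\utg}\le\enorm{u-u_h}+\enorm{\Phih-\Phitg},
\]
since $u-\utg=(u-u_h)+(\Phih-\Phitg)$ with $u_h=G+\Psi_h+\Phih$ and $\utg=G+\Psi_h+\Phitg$. Here $\Phih$ is the fine nonlinear solution with datum $U_h$. Corollary~\ref{cor:fine} bounds the first term by $Ch^{s-1}$ for $h\le h_0$. Lemma~\ref{lem:tg}, which holds for any $\PhiH\in\VHz\subset\Vhz$, gives
\[
\enorm{\Phih-\Phitg}\le C\norm{\Phih-\PhiH}_{0,4}^2 ,
\]
so it remains to bound $\norm{\Phih-\PhiH}_{0,4}$ by $C\big(H^{s-1+t/4}+h^{s-1+t/4}\big)$.

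To do this, I insert the continuous solution $\Phit$:
\[
\norm{\Phih-\PhiH}_{0,4}\le\norm{\Phit-\Phih}_{0,4}+\norm{\Phit-\PhiH}_{0,4}.
\]
For the first term, I apply Corollary~\ref{cor:L4} on $\mathcal T_h$ with $\Psi^\ast=\Psi_h$. Its hypotheses hold: $\norm{\Psi_h}_{H^1}\le\Lambda$ by \eqref{eq:Lambda}, and $\delta\le C_\Psi h^{s-1+t}$ by Lemma~\ref{lem:Psih}. This gives $Ch^{s-1+t/4}\le CH^{s-1+t/4}$ because $h<H$. For the second term, I apply Corollary~\ref{cor:L4} on the coarse mesh $\mathcal T_H$ (a member of the family) with $\Psi^\ast=\Psi^\sharp$. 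Hypothesis \eqref{eq:sharp} supplies exactly $\norm{\Psi^\sharp}_{H^1}\le\Lambda$ and $\delta\le C_\Psi H^{s-1+t}$. This gives $CH^{s-1+t/4}$ for $H\le h_0$.

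Squaring, $\norm{\Phih-\PhiH}_{0,4}^2\le CH^{2(s-1)+t/2}$, and combining with the first term gives \eqref{eq:tgmain}.

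The main point to check is uniformity of the constants. $h_0$ and $C$ in Theorem~\ref{thm:energy} and Corollary~\ref{cor:L4} must not depend on which admissible datum $\Psi^\ast$ is used. They depend only on $\Lambda$, $C_\delta=C_\Psi$, and the fixed data, so a single $h_0$ and $C$ serve both meshes. I also verify that the case $\Psi^\sharp=\Psi_h$ used on the coarse mesh is admissible; it is, since \eqref{eq:Ustar} requires only $\Psi^\ast\in H^1(\Om)$ with the norm bound. Otherwise the argument is a direct assembly of earlier results.
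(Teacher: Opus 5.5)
Your proposal is correct and is essentially the paper's proof. It uses the triangle inequality with Corollary~\ref{cor:fine} and Lemma~\ref{lem:tg}, inserts $\Phit$ in $L^4(\Om)$, and applies Corollary~\ref{cor:L4} on $\mathcal T_h$ (datum $\Psi_h$) and on $\mathcal T_H$ (datum $\Psi^\sharp$), with the same observation that $h_0$ and $C$ depend on the datum only through $\Lambda$ and $C_\Psi$ and so serve both meshes.
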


\begin{proof}
By the triangle inequality and Corollary~\ref{cor:fine},
\begin{equation}\label{eq:tri}
\enorm{u-\utg}\le\enorm{u-u_h}+\enorm{\Phih-\Phitg}\le Ch^{s-1}+\enorm{\Phih-\Phitg}.
\end{equation}
By Lemma~\ref{lem:tg} and the triangle inequality in $L^4(\Om)$,
\begin{equation}\label{eq:quad}
\enorm{\Phih-\Phitg}\le C\norm{\Phih-\PhiH}_{0,4}^2\le 2C\Big(\norm{\Phit-\Phih}_{0,4}^2+\norm{\Phit-\PhiH}_{0,4}^2\Big),
\end{equation}
where we used $(x+y)^2\le2(x^2+y^2)$. The function $\Phih$ is the solution of \eqref{eq:Phih} on $\mathcal T_h$ with $\Psi^\ast=\Psi_h$, for which $\delta\le C_\Psi h^{s-1+t}$ by Lemma~\ref{lem:Psih}; the function $\PhiH$ is the solution of \eqref{eq:Phih} on $\mathcal T_H$ with $\Psi^\ast=\Psi^\sharp$, for which $\delta\le C_\Psi H^{s-1+t}$ by \eqref{eq:sharp}. Since $h<H\le h_0$, Corollary~\ref{cor:L4} applies on both meshes and gives
\[
\norm{\Phit-\Phih}_{0,4}\le Ch^{s-1+t/4},\qquad \norm{\Phit-\PhiH}_{0,4}\le CH^{s-1+t/4}.
\]
The constants in Corollary~\ref{cor:L4} depend on the datum only through $\Lambda$ and $C_\Psi$, and are therefore the same for both meshes. Inserting these bounds into \eqref{eq:quad} and then into \eqref{eq:tri}, and using $h<H$, yields \eqref{eq:tgmain}.
\end{proof}

\begin{corollary}\label{cor:H}
Under the hypotheses of Theorem~\ref{thm:twogrid}, if the coarse mesh size satisfies
\begin{equation}\label{eq:Hchoice}
H\le C h^{\gamma},\qquad \gamma:=\frac{s-1}{2(s-1)+t/2},
\end{equation}
then $\enorm{u-\utg}\le Ch^{s-1}$, i.e., the two-grid solution has the same asymptotic accuracy as the fine-mesh solution $u_h$. If $\sigma\ge s$, then $t=s-1$ and $\gamma=2/5$ for every $s\in(1,2]$; in particular $H\sim h^{2/5}$ suffices when $s=\sigma$, which includes the case of full regularity $s=\sigma=2$.
\end{corollary}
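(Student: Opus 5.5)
The plan is to take Theorem~\ref{thm:twogrid} as the starting point, so that the corollary reduces to a statement about exponents. Under \eqref{eq:Hchoice}, with $H\le Ch^{\gamma}$, I would bound the coarse-mesh term in \eqref{eq:tgmain}:
\[
H^{2(s-1)+t/2}\le C h^{\gamma(2(s-1)+t/2)}=Ch^{s-1}.
\]
This holds because the exponent $2(s-1)+t/2$ is positive and $H\mapsto H^{p}$ is increasing for $p>0$. The constant becomes $C^{2(s-1)+t/2}$, which depends only on $s$, $t$ and the constant in \eqref{eq:Hchoice}. Inserting this into \eqref{eq:tgmain} gives $\enorm{u-\utg}\le Ch^{s-1}$. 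Corollary~\ref{cor:fine} gives the same order for $\enorm{u-u_h}$, which justifies the phrase ``same asymptotic accuracy''.

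For the second assertion, $\sigma\ge s$ implies $\min\{s,\sigma\}=s$, so $t=s-1$. Substituting into the formula for $\gamma$ gives
\[
\gamma=\frac{s-1}{2(s-1)+(s-1)/2}=\frac{1}{5/2}=\frac25 .
\]
The cancellation is legitimate because $s>1$, and the resulting value of $\gamma$ is independent of $s\in(1,2]$. The cases $s=\sigma$ and $s=\sigma=2$ are special cases of $\sigma\ge s$.

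The only step needing care is compatibility with the hypotheses of Theorem~\ref{thm:twogrid}, which require $h<H\le h_0$. Because $\gamma<1$, the bound $H\le Ch^\gamma$ allows $H>h$, and for $h$ small it also forces $H\le h_0$. I would therefore read \eqref{eq:Hchoice} as a choice of $H$ with $h<H\le\min\{h_0,Ch^\gamma\}$; this interval is nonempty for $h$ sufficiently small. If instead $H$ is chosen proportional to $h^\gamma$, the same smallness of $h$ ensures $H\le h_0$. I expect no real obstacle beyond making this range explicit.
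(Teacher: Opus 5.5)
Your proposal is correct and matches the paper, which states the corollary without a separate proof as an immediate consequence of Theorem~\ref{thm:twogrid}: substituting $H\le Ch^{\gamma}$ into \eqref{eq:tgmain} gives $H^{2(s-1)+t/2}\le Ch^{s-1}$, and setting $t=s-1$ gives $\gamma=2/5$. Your remark that $H$ must also satisfy $h<H\le h_0$, which is possible for small $h$ because $\gamma<1$, makes explicit a point the paper leaves implicit.
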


\begin{remark}
If $\sigma<s$, then $t=\sigma-1<s-1$ and $2/5<\gamma<1/2$, with $\gamma\to1/2$ as $\sigma\to1$; the coarse mesh then has to be somewhat finer relative to the fine mesh. The exponent $\gamma$ is the same as in \cite[Remark~4.5]{HolstSzypowskiZhu2013}, where it was obtained under a discrete maximum principle; here no angle condition on $\mathcal T_h$ and $\mathcal T_H$ is needed, and the nonlinearity contains the discrete datum $U_h$. For quasi-uniform meshes and $\gamma=2/5$, the numbers of unknowns satisfy $N_H\sim H^{-3}\sim h^{-6/5}\sim N_h^{2/5}$; for example, $N_h\approx10^6$ corresponds to $N_H$ of a few hundred. In practice the coarse mesh still has to resolve the molecular surface, and the constants in \eqref{eq:tgmain} depend on the geometry, so a somewhat finer coarse mesh may be preferable.
\end{remark}

\begin{remark}[the two variants of Step~3]\label{rem:variants}
Theorem~\ref{thm:twogrid} covers both $\Psi^\sharp=\Psi_h$ and $\Psi^\sharp=\Psi_H$, because the proof compares $\PhiH$ directly with $\Phit$ and only needs the perturbation of the datum in the coarse problem to be of the order $H^{s-1+t}$ of the coarse $L^2$ error. The fine Newton correction \eqref{eq:fine} always uses $U_h$. The choice $\Psi^\sharp=\Psi_H$ costs one extra linear solve on the coarse mesh, but the nonlinear coarse problem is then assembled entirely on $\mathcal T_H$. With $\Psi^\sharp=\Psi_h$ no coarse linear solve is needed, but the integrals $(\rt(U_h+\PhiH),v)_{\Ds}$ have to be evaluated by quadrature on the fine mesh, since $U_h$ is piecewise linear on $\mathcal T_h$; this costs $O(N_h)$ operations per coarse Newton iteration, but no fine-mesh solve. In both cases the fine-mesh work consists of two linear solves, whereas the scheme of \cite{Xie2014} needs one fine linear solve for $\Psi_h$ and one per Newton iteration.
\end{remark}

\subsection{The symmetric \texorpdfstring{$1{:}1$}{1:1} electrolyte}

For a symmetric $1{:}1$ ionic solution, $\rho(\xi)=\kappa^2\sinh\xi$ with $\kappa^2=2\beta I_s$, and $\rt=\kappa^2\sinh_\tau$, $\rt'=\kappa^2\cosh_\tau$ with $\sinh_\tau,\cosh_\tau$ defined in \eqref{eq:sinhtau}. Here $\Zmax=1$ and $\tau_0=2\MU$. The coarse and fine problems of Algorithm~\ref{alg:tg} read: find $\PhiH\in\VHz$ with
\[
a(\PhiH,v)+\kappa^2\big(\sinh_\tau(U^\sharp+\PhiH),v\big)_{\Ds}=0\qquad\forall v\in\VHz,
\]
and find $\Phitg\in\Vhz$ with
\begin{multline*}
a(\Phitg,v)+\kappa^2\big(\cosh_\tau(U_h+\PhiH)\Phitg,v\big)_{\Ds}\\
=\kappa^2\big(\cosh_\tau(U_h+\PhiH)\PhiH-\sinh_\tau(U_h+\PhiH),v\big)_{\Ds}\qquad\forall v\in\Vhz .
\end{multline*}
Apart from the use of $C^2$ instead of $C^0$ truncations ($\sinh_\tau,\cosh_\tau$ instead of $\widetilde{\sinh},\widetilde{\cosh}$ in \cite[Eq.~(49)]{Xie2014}), the coarse problem is the nonlinear problem solved by the PBE finite element package of \cite{Xie2014}, and the fine problem is one step of its modified Newton scheme \cite[Eq.~(40)]{Xie2014} with unit step length, taken on the fine mesh from the coarse solution.

\begin{corollary}\label{cor:11}
Let $n=2$, $Z_1=-Z_2=1$, $\bar c_1=\bar c_2=I_s>0$, let Assumptions~\ref{ass:U} and \ref{ass:reg} hold with $\hat g$ smooth, and let $\tau\ge2\MU$. Then the conclusions of Theorem~\ref{thm:twogrid} and Corollary~\ref{cor:H} hold for the two-grid scheme above. Moreover, the solution $\Phit$ of \eqref{eq:Phiweak} satisfies $|\Phit|\le\MU$ a.e.\ in $\Om$, so that the dimensionless potential satisfies $\norm{u}_{0,\infty,\Ds}=\norm{U+\Phit}_{0,\infty,\Ds}\le 2\MU$.
\end{corollary}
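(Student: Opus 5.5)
The plan is to check that the $1{:}1$ case is covered by the general theory, so the corollary reduces to earlier results applied to this particular $\rho$. First, verify the structural hypotheses. With $n=2$, $Z_1=-Z_2=1$ and $\bar c_1=\bar c_2=I_s>0$, the neutrality condition \eqref{eq:neutral} holds: $\sum_iZ_i\bar c_i=I_s-I_s=0$ and $\bar c=2I_s>0$. Since $\Zmax=1$, the threshold \eqref{eq:tau0} becomes $\tau_0=2\MU$, so the hypothesis $\tau\ge2\MU$ is exactly \eqref{eq:tau0}.

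Next, identify the scheme. From \eqref{eq:rhotau},
\[
\rt(\xi)=-\beta I_s\Etau(-\xi)+\beta I_s\Etau(\xi)=2\beta I_s\sinh_\tau(\xi)=\kappa^2\sinh_\tau(\xi),
\]
and differentiating gives $\rt'=\kappa^2\cosh_\tau$ by \eqref{eq:sinhtau}. Substituting these into \eqref{eq:coarse} and \eqref{eq:fine} yields the two displayed problems, so the scheme above is Algorithm~\ref{alg:tg} for this particular $\rho$. All hypotheses of Theorem~\ref{thm:twogrid} then hold: Assumptions~\ref{ass:U} and \ref{ass:reg} are given, $\hat g$ is smooth, and \eqref{eq:sharp} holds for either choice of $\Psi^\sharp$ by \eqref{eq:Lambda} and Lemma~\ref{lem:Psih}. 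Hence \eqref{eq:tgmain} holds, and Corollary~\ref{cor:H} follows because it depends only on \eqref{eq:tgmain}.

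For the pointwise statement, apply Theorem~\ref{thm:equiv}. It gives $\norm{\Phit}_{0,\infty}\le\MU$ and \eqref{eq:UPhi}. Since $u=G+\Psi+\Phit=U+\Phit$ by \eqref{eq:decomp} and \eqref{eq:U}, we get $\norm{u}_{0,\infty,\Ds}=\norm{U+\Phit}_{0,\infty,\Ds}\le2\MU$.

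There is no genuine obstacle. The only step needing care is the computation $\rt=\kappa^2\sinh_\tau$. There one uses $Z_2=-1$, so that the term $\Etau(-Z_2\xi)=\Etau(\xi)$ enters with the sign $+$, and the definition $\kappa^2=2\beta I_s$.
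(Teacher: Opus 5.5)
Your proposal is correct and follows the paper's intended argument. The paper gives no separate proof of this corollary; it is a direct specialization of Theorems~\ref{thm:equiv} and \ref{thm:twogrid} and Corollary~\ref{cor:H}, once one checks \eqref{eq:neutral}, $\Zmax=1$ (so $\tau_0=2\MU$), and $\rt=\kappa^2\sinh_\tau$, $\rt'=\kappa^2\cosh_\tau$, which is exactly what you verify.
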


\begin{remark}[a linear initial guess is not needed]
Two initial guesses for the Newton minimization are suggested in \cite{Xie2014}: $\Phit^{(0)}=0$, or the solution of the linearized problem $-\es\Delta\Phit+\kappa^2\Phit=-\kappa^2U$ in $\Ds$. In the two-grid scheme Newton's method runs only on the coarse mesh, where iterations are cheap, so the initial guess matters much less; on the fine mesh the starting point is the coarse solution, and by Theorem~\ref{thm:twogrid} one step is enough. If the nonlinear fine-mesh solution itself is wanted, further Newton steps can be taken on the fine mesh starting from $\Phitg$, cf.\ the experiments in \cite{HolstSzypowskiZhu2013,YingFanLiLu2021}.
\end{remark}

\begin{remark}[on the constants]
The constants in Theorems~\ref{thm:energy}--\ref{thm:twogrid} depend exponentially on the truncation level through the factor $e^{\tau}$ in $C_0,C_1,C_2$ of \eqref{eq:rhogrowth}, and on $\MU$ through \eqref{eq:UPhi}. Since $\MU$ is an essential supremum of the potential of the solute in water over the solvent region, including points close to the molecular surface, $e^{\tau_0}$ may be large for highly charged solutes; this exponential dependence reflects the size of the ionic densities $e^{-Z_iu}$ near the surface and also appears in \cite{ChenHolstXu2007,HolstSzypowskiZhu2013}. The level $\tau=85$ used in \cite{Xie2014} to prevent overflow is allowed by the theory but gives larger constants; the convergence rates, in particular the exponents in \eqref{eq:tgmain}, do not depend on the choice of $\tau\ge\tau_0$.
\end{remark}

\section{Numerical verification}
\label{sec:numerics}

We test the convergence rates of Sections~\ref{sec:fem} and \ref{sec:twogrid}, the quadratic dependence of the two-grid correction on the coarse error (Lemma~\ref{lem:tg}), the choice $H\sim h^{2/5}$, and Theorem~\ref{thm:equiv} on a problem with a manufactured solution. The test isolates the nonlinear component $\Phit$; the linear component $\Psi$ is not computed, and $U$ is prescribed analytically. Since the nonlinear term is integrated with the vertex quadrature rule (see below), only the nodal values of $U$ enter the discrete problems, i.e., the discrete problem on each mesh automatically uses the interpolated datum $I_hU$ (resp.\ $I_HU$); the data perturbation $\delta=\norm{U-I_hU}_{0,2,\Ds}=O(h^2)$ of \eqref{eq:delta} is therefore present implicitly, and the two variants of Step~3 of Algorithm~\ref{alg:tg} coincide. All experiments were run on an Apple Mac Studio (Apple M4 Max, 16 cores, 64 GB),  the code is available from the authors.

\subsection{Setting}

Let $\Om=(-1,1)^3$, $\Dp=(-\tfrac12,\tfrac12)^3$, $\ep=2$, $\es=80$, and consider a symmetric $1{:}1$ electrolyte with $\kappa^2=40$ in $\Ds$. We solve
\begin{equation}\label{eq:numprob}
-\nabla\cdot(\eps\nabla\Phi)+\kappa^2\chi_{\Ds}\sinh(U+\Phi)=f\ \text{ in }\Om,\qquad \Phi=0\ \text{ on }\partial\Om,
\end{equation}
with the homogeneous interface conditions of \eqref{eq:Phi}, where the source $f$ is chosen so that the exact solution (denoted by $\Phi$ in this section) is
\[
\Phi(x,y,z)=\tfrac14\,q(x)q(y)q(z),\qquad q(x)=\cos(2\pi x)-1,
\]
and the datum is $U(x,y,z)=2\cos(\pi x)\cos(\pi y)\cos(\pi z)$, so that $\MU=2$. Since $q'(\pm\tfrac12)=0$, the normal derivative of $\Phi$ vanishes on $\Gam$ and both interface conditions hold; $f=-\eps\Delta\Phi+\kappa^2\chi_{\Ds}\sinh(U+\Phi)$ is piecewise smooth and discontinuous across $\Gam$. The presence of $f$ does not affect the error analysis except for an additional linear term on the right-hand sides; it does, however, invalidate the a priori bound of Theorem~\ref{thm:Linfty}, see Section~\ref{sec:num:tau}. The manufactured solution is smooth, so $s=2$. For the dual problem \eqref{eq:dual} on the cubic interface with $\es/\ep=40$ we cannot expect $\sigma=2$ (cf.\ Remark~\ref{rem:reg}), so Corollary~\ref{cor:H} guarantees full accuracy only for $H\le Ch^{\gamma}$ with some $\gamma\in(2/5,1/2)$; the observed $L^2$ rates below are nevertheless consistent with $t=1$.

The domain is partitioned into $N^3$ cubes of side $h=2/N$, each split into six Kuhn tetrahedra (we identify $h$ with the cube side, which is proportional to the element diameter); for $N$ divisible by $4$ the mesh is fitted to $\Gam$, and the meshes for $N$ and $2N$ are nested, so that $\VHz\subset\Vhz$ and the prolongation of $\PhiH$ to the fine mesh in Step~4 of Algorithm~\ref{alg:tg} is exact. The nonlinear term and the load are integrated with the vertex quadrature rule (mass lumping), which makes the Newton Jacobian equal to the stiffness matrix plus a diagonal matrix. The truncated functions $\sinh_\tau,\cosh_\tau$ of \eqref{eq:sinhtau} are used with $\tau=85$ unless stated otherwise; by Theorem~\ref{thm:equiv} this does not alter the solution (see also Section~\ref{sec:num:tau}). All linear systems on meshes with more than $2\cdot10^4$ unknowns are solved by the conjugate gradient method with a smoothed-aggregation algebraic multigrid preconditioner to a relative residual of $10^{-12}$; smaller systems are solved directly. Newton's method (undamped) on a given mesh is started from zero and stopped when the Euclidean norm of the residual is below $10^{-12}$ relative to that of the load; this applies to the fine-mesh reference solutions, to the coarse solves in Algorithm~\ref{alg:tg}, and to the runs of Section~\ref{sec:num:tau}. Errors are computed with the four-point degree-two quadrature rule and are reported relative to the exact solution: for a function $v$ we write
\[
e_E(v):=\frac{\enorm{v}}{\enorm{\Phi}},\qquad e_2(v):=\frac{\norm{v}_{0,2}}{\norm{\Phi}_{0,2}},\qquad e_4(v):=\frac{\norm{v}_{0,4}}{\norm{\Phi}_{0,4}},
\]
with $\enorm{\Phi}=68.41$, $\norm{\Phi}_{0,2}=1.299$ and $\norm{\Phi}_{0,4}=1.272$. Differences of discrete solutions, such as $\Phih-\Phitg$, are scaled by the same norms of $\Phi$.

\subsection{Fine-mesh finite element solution}

Table~\ref{tab:fine} reports the relative errors of the finite element solution $\Phih$ of \eqref{eq:Phih} with $U^\ast=I_hU$ computed by Newton's method on the fine mesh alone. The energy norm error is $O(h)$ and the $L^2$ error is $O(h^2)$, in agreement with Corollary~\ref{cor:L4} for $s=2$ and $t=1$. At $h=1/32$ the relative errors are about $8.7\%$ in the energy norm and $0.36\%$ in $L^2$; the energy error is the larger one because $\enorm{\cdot}$ contains the factor $\es=80$ in $\Ds$ and the exact solution has frequency $2\pi$. The $L^4$ error also converges as $O(h^2)$, which is faster than the rate $h^{s-1+t/4}=h^{5/4}$ guaranteed by \eqref{eq:L2L4}; the latter is obtained from the Gagliardo--Nirenberg inequality \eqref{eq:GN} and is not sharp for smooth solutions. Since the nonlinearity is weak compared with the diffusion term, Newton's method converges in three or four iterations from the zero initial guess.

\begin{table}[htb]
\centering
\caption{Relative errors of the fine-mesh finite element solution $\Phih$ (full Newton iteration).}
\label{tab:fine}
\begin{tabular}{crccccccc}
\toprule
$h$ & DOFs & $e_E(\Phi-\Phih)$ & rate & $e_2(\Phi-\Phih)$ & rate & $e_4(\Phi-\Phih)$ & rate & Newton its.\\
\midrule
$1/4$ & 343 & 6.835e$-$01 & -- & 2.259e$-$01 & -- & 1.926e$-$01 & -- & 4 \\
$1/8$ & 3375 & 3.448e$-$01 & 0.99 & 5.655e$-$02 & 2.00 & 4.791e$-$02 & 2.01 & 3 \\
$1/16$ & 29791 & 1.729e$-$01 & 1.00 & 1.420e$-$02 & 1.99 & 1.208e$-$02 & 1.99 & 3 \\
$1/32$ & 250047 & 8.655e$-$02 & 1.00 & 3.557e$-$03 & 2.00 & 3.030e$-$03 & 2.00 & 3 \\
\bottomrule
\end{tabular}
\end{table}

\subsection{Two-grid solution with \texorpdfstring{$H\sim h^{2/5}$}{H ~ h\^{}(2/5)}}

Table~\ref{tab:tgrule} and Figure~\ref{fig:results}(a) compare the two-grid solution $\Phitg$ of Algorithm~\ref{alg:tg} with the fine-mesh solution $\Phih$ for coarse meshes chosen by the rule $H\approx h^{2/5}$ (which Corollary~\ref{cor:H} covers only when $\sigma\ge s$): since the available coarse meshes have $H\in\{1/2,1/4,\dots\}$, we take for $H$ the dyadic mesh size nearest to $h^{2/5}$; the value of $h^{2/5}$ is listed for comparison. For $h\le1/16$ the energy norm errors of the two-grid and the fine-mesh solutions agree to three significant digits, and both converge as $O(h)$. The additional error $e_E(\Phih-\Phitg)$ caused by replacing the fine Newton iteration by one correction is between two and three orders of magnitude below the discretization error for $h\le1/16$. The last column gives the total time of the two-grid solve (coarse Newton iteration plus one fine linear solve, excluding mesh and load assembly in both cases); for $h=1/32$ it is $2.6$~s against $8.2$~s for the full fine-mesh Newton iteration. The gain is moderate because the full Newton iteration needs only three steps in this test; for problems that require more (damped) Newton steps, the saving should be larger.

\begin{table}[htb]
\centering
\caption{Two-grid solution with $H\approx h^{2/5}$: relative energy-norm error of the fine-mesh solution $\Phih$ and of the two-grid solution $\Phitg$, relative $L^2$ error of $\Phitg$, relative distance between the two, coarse Newton iterations, and wall time of the two-grid solve.}
\label{tab:tgrule}
\setlength{\tabcolsep}{4pt}
\begin{tabular}{ccccccccc}
\toprule
$h$ & $H$ & $h^{2/5}$ & $e_E(\Phi-\Phih)$ & $e_E(\Phi-\Phitg)$ & $e_2(\Phi-\Phitg)$ & $e_E(\Phih-\Phitg)$ & coarse its. & time (s)\\
\midrule
$1/4$ & $1/2$ & 0.574 & 6.8353e$-$01 & 7.1349e$-$01 & 2.792e$-$01 & 7.98e$-$02 & 6 & 0.01 \\
$1/8$ & $1/2$ & 0.435 & 3.4480e$-$01 & 3.5485e$-$01 & 9.734e$-$02 & 4.65e$-$02 & 6 & 0.06 \\
$1/16$ & $1/4$ & 0.330 & 1.7294e$-$01 & 1.7296e$-$01 & 1.438e$-$02 & 3.15e$-$04 & 4 & 0.28 \\
$1/32$ & $1/4$ & 0.250 & 8.6551e$-$02 & 8.6560e$-$02 & 3.732e$-$03 & 2.82e$-$04 & 4 & 2.64 \\
\bottomrule
\end{tabular}
\end{table}

\begin{figure}[htb]
\centering
\includegraphics[width=\textwidth]{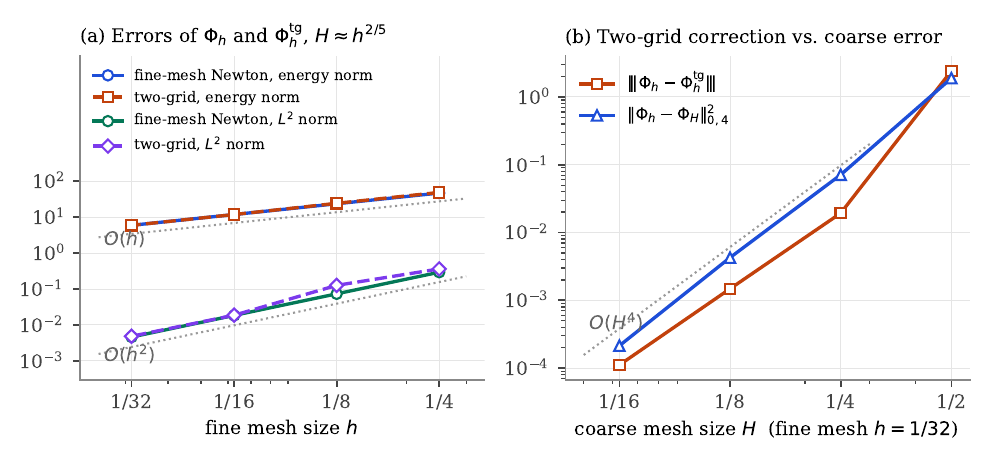}
\caption{(a) Relative energy and $L^2$ errors of the fine-mesh solution $\Phih$ and of the two-grid solution $\Phitg$ with $H\approx h^{2/5}$ (Tables~\ref{tab:fine} and \ref{tab:tgrule}). (b) For the fixed fine mesh $h=1/32$: the relative two-grid correction error $e_E(\Phih-\Phitg)$ and the squared relative $L^4$ coarse error $e_4(\Phih-\PhiH)^2$ as functions of $H$ (Table~\ref{tab:fixedh}).}
\label{fig:results}
\end{figure}

\subsection{Dependence on the coarse mesh for fixed \texorpdfstring{$h$}{h}}
\label{sec:num:fixed}

To examine Lemma~\ref{lem:tg} directly, we fix the fine mesh $h=1/32$ and vary the coarse mesh, $H=1/2,1/4,1/8,1/16$. Table~\ref{tab:fixedh} and Figure~\ref{fig:results}(b) report the relative coarse error $e_4(\Phih-\PhiH)$, the relative two-grid correction error $e_E(\Phih-\Phitg)$, the ratio of the corresponding absolute quantities, and the total error. The coarse $L^4$ error converges approximately as $O(H^2)$, and the two-grid correction error with observed order about $3.7$, so that the ratio $\enorm{\Phih-\Phitg}/\norm{\Phih-\PhiH}_{0,4}^2$ stays bounded (between $0.27$ and $0.52$ in the asymptotic range), as predicted by \eqref{eq:tglemma}. The observed order is higher than the guaranteed order $H^{2(s-1)+t/2}=H^{5/2}$ in \eqref{eq:tgmain}, for the same reason as in Table~\ref{tab:fine}: the $L^4$ estimate \eqref{eq:L2L4} is a worst-case bound. Both quantities are measured against the fine-mesh solution $\Phih$ with $h=1/32$, so that the last row ($H=2h$) is at the limit of the asymptotic regime, which explains the slight drift of the ratio. Already for $H=1/4$, i.e., with $343$ interior coarse degrees of freedom against $250\,047$ on the fine mesh, the energy error of the two-grid solution exceeds that of the fine-mesh solution by only about $0.01\%$.

\begin{table}[htb]
\centering
\caption{Fixed fine mesh $h=1/32$ and varying coarse mesh: relative coarse error in the energy and $L^4$ norms, relative two-grid correction error, the ratio $\enorm{\Phih-\Phitg}/\norm{\Phih-\PhiH}_{0,4}^2$ of absolute quantities as in \eqref{eq:tglemma}, and relative energy error of $\Phitg$ (the fine-mesh solution has $e_E(\Phi-\Phih)$=8.6551e-02).}
\label{tab:fixedh}
\begin{tabular}{cccccccc}
\toprule
$H$ & $e_E(\Phih-\PhiH)$ & $e_4(\Phih-\PhiH)$ & rate & $e_E(\Phih-\Phitg)$ & rate & ratio & $e_E(\Phi-\Phitg)$\\
\midrule
$1/2$ & 2.06e+00 & 1.080e+00 & -- & 3.533e$-$02 & -- & 1.28 & 9.4700e$-$02 \\
$1/4$ & 6.78e$-$01 & 2.097e$-$01 & 2.36 & 2.822e$-$04 & 6.97 & 0.27 & 8.6560e$-$02 \\
$1/8$ & 3.34e$-$01 & 5.134e$-$02 & 2.03 & 2.164e$-$05 & 3.70 & 0.35 & 8.6552e$-$02 \\
$1/16$ & 1.50e$-$01 & 1.150e$-$02 & 2.16 & 1.625e$-$06 & 3.73 & 0.52 & 8.6551e$-$02 \\
\bottomrule
\end{tabular}
\end{table}

\subsection{Dependence on the truncation level}
\label{sec:num:tau}

Finally, we solve the coarse problem on the mesh $H=1/4$ with Newton's method from the zero initial guess for several truncation levels $\tau$ and compare the results with the solution for $\tau=85$. Here $\max_{\Ds}|U+\PhiH|=2.508$. For every $\tau\ge3$ the truncation is never activated during the iteration and the computed solutions are bitwise identical, in agreement with Theorem~\ref{thm:equiv}. Because of the source term $f$ the bound $2\MU=4$ does not apply here, but the argument of Theorem~\ref{thm:equiv}, applied to the discrete problem, works with the threshold $\Zmax\max|U+\PhiH|=2.508$ in its place. For $\tau=2.5$, just below this value, the truncation is activated and the solutions differ by $3\cdot10^{-9}$ in the maximum norm; for $\tau=2$ and $\tau=1.5$ the differences grow to $6.4\cdot10^{-4}$ and $3.5\cdot10^{-3}$, so the computed solution changes as soon as $\tau$ drops below the threshold in Theorem~\ref{thm:equiv}. Newton's method needed four iterations for $\tau\ge2.5$ and three for $\tau\le2$.

\section{Concluding remarks}
\label{sec:conclusion}

We have analyzed a two-grid finite element method for the nonlinear Poisson--Boltzmann equation based on the solution decomposition $u=G+\Psi+\Phit$ of \cite{Xie2014}. The nonlinear problem for $\Phit$ is solved only on a coarse mesh, and the fine-mesh approximation is obtained by one Newton correction, so that only two symmetric positive definite linear systems are solved on the fine mesh. The analysis uses a $C^2$ truncation of the exponential that grows quadratically above a level $\tau$. For $\tau\ge2\Zmax\norm{G+\Psi}_{L^\infty(\Ds)}$ the truncated problem has the same solution as the original one; this gives a new proof of well-posedness, shows that the overflow safeguard used in practice does not change the solution, and leads to error estimates that need no discrete maximum principle and no angle condition on the meshes. Under piecewise $H^{s}$ regularity of the primal problem and $H^{\sigma}$ regularity of a linear dual problem we proved $\enorm{u-\utg}\le C(h^{s-1}+H^{2(s-1)+t/2})$ with $t=\min\{s,\sigma\}-1$, so that $H\sim h^{2/5}$ suffices whenever $\sigma\ge s$. The estimates include the error from replacing $U=G+\Psi$ by its finite element approximation, allow the coarse problem to be assembled with coarse-mesh data, and hold for any number of ionic species. A test with a manufactured solution in three dimensions agrees with these results.

Several extensions seem possible. Higher-order elements, and meshes that only approximate a curved interface, could be treated by combining the arguments of Section~\ref{sec:fem} with the approximation results of \cite{LiMelenkWohlmuthZou2010}. The same approach should apply to other regularized forms of the PBE, such as the two-component splitting of \cite{ChenHolstXu2007,Holst2012}. For the size-modified PBE, whose nonlinearity is bounded, no truncation would be needed. The combination of solution decomposition and two-grid linearization can also be used for the Poisson--Nernst--Planck equations of ion channels \cite{XieChao2020,ChaoXie2021}, where the coupling between potential and concentrations takes the place of the exponential nonlinearity; we plan to study this in future work, together with experiments on proteins using the PBE finite element package of \cite{Xie2014}.

\section*{Author Contributions}

All authors contributed to the analysis and to the writing of the manuscript. Z.~Chao also designed the numerical test and performed the computations.

\section*{Acknowledgments}

Z. Chao was supported by an AMS-Simons Research Enhancement Grant for PUI Faculty.

\section*{Conflicts of Interest}

The authors declare no conflicts of interest.

\section*{Data Availability Statement}

The code of Section~\ref{sec:numerics} is available from the corresponding author upon reasonable request.

\end{document}